\newtheorem{theorem}{Theorem}[section]
\newtheorem{remark}[theorem]{Remark}
\newtheorem{corollary}[theorem]{Corollary}
\newtheorem{lemma}[theorem]{Lemma}
\newtheorem{proposition}[theorem]{Proposition}
\newtheorem{definition}[theorem]{Definition}
\newcommand{\R}{\mathbb R}
\newcommand{\Om}{\Omega}
\newcommand{\haus}{\mathcal{H}^1}
\newcommand{\clen}{L^*}
\newcommand{\mi}{\mathcal{SM}}
\newcommand{\per}{\mathcal S}
\newcommand{\leb}{\mathcal{L}}
\newcommand{\la}{\lambda}
\newcommand{\K}{K_\text{\tiny opt}}
\newcommand{\hull}{\mathrm{hull}}
\title[Optimizing the first Dirichlet eigenvalue]{Optimizing the first Dirichlet eigenvalue of \\the Laplacian with an obstacle}
\author{Antoine Henrot}
\address[A.\@ Henrot]{Institut \'Elie Cartan de Lorraine, UMR 7502 Universit\'e de Lorraine and CNRS, B.P.\@ 70239, 54506 Vandoeuvre-l\`es-Nancy, France}
\email{antoine.henrot@univ-lorraine.fr}
\author{Davide Zucco}
\address[D.\@ Zucco]{Dipartimento di Matematica ``Giuseppe Peano'', Universit\`a di Torino, 10123 Torino, Italy}
\email{davide.zucco@unito.it}
\begin{document}

\begin{abstract}
Inside a fixed bounded domain $\Omega$ of the plane, we look for the best compact connected set $K$, of given
perimeter, in order to {\it maximize} the first Dirichlet eigenvalue $\lambda_1(\Omega\setminus K)$.
We discuss some of the qualitative properties of the maximizers, moving toward existence, regularity and geometry.
Then we study the problem in specific domains: disks, rings, and, more generally, disks with convex holes. In these situations, we prove symmetry and in some cases non symmetry results, identifying the solution.

We choose to work with the \emph{outer Minkowski content} as the ``good" notion of perimeter. Therefore, we are led to prove some
new properties for it as its lower semicontinuity with respect to the Hausdorff convergence and the fact
that the outer Minkowski content is equal to the Hausdorff lower semicontinuous envelope of the classical
perimeter.
\end{abstract}

\maketitle

\paragraph{\small \textbf{Key words.} First Laplace Eigenvalue, Shape Optimization, Outer Minkowski Content.}
 
\paragraph{\small \textbf{AMS subject classifications.} 28A75, 35P15, 49Q10, 49Q15.}

\section{Introduction}

A shape optimization problem is composed by three prime ingredients: the cost functional, the class of admissible sets and the constraints. One also has to decide whether to tackle the minimization problem or the maximization problem. Often, only one of
these is relevant.

A well studied cost functional is the one which models the principal frequency of vibration of a membrane. A shape optimization problem for this quantity has a long history going back at least to 1877 when Lord Rayleigh observed and conjectured that among all membranes of given area, the disk has the minimum principal frequency. 
This conjecture was solved by Faber in 1923 and, independently, by Krahn in 1924, and many other contributions to similar problems for the principal frequency appeared during the 20th and 21st centuries, see \cite{hen}, \cite{H2}. 
In 1963 Hersch, using the work of Payne and Weinberger \cite{paywei}, proved that of all doubly connected membranes of given area and perimeter, the ring has the maximal principal frequency (see \cite{her1, her2} and also Theorem~\ref{hpw} of this paper for the precise statement).

The principal frequency of a membrane fixed along its boundary is mathematically described by the first eigenvalue of the Laplacian $\la_1$ of a bounded domain $\Om$ in the plane with Dirichlet boundary conditions on $\partial \Om$. In this paper we discuss various shape optimization problems in the plane for the first eigenvalue of the Laplacian with Dirichlet boundary conditions on a domain with an \emph{obstacle}: inside a given bounded domain $\Om\subset \R^2$ our goal is to find the best compact set $K$ (the obstacle) so as to optimize the first Dirichlet eigenvalue of the open set $\Om\setminus K$, namely
\[
\mathrm{opt} \{ \la_1(\Om\setminus K) : \; \text{$K\subset\overline{\Om}$, $K$ closed and subjects to additional constraints}\}.
\]
We recall the variational characterization of $\la_1(\Om\setminus K)$ as the minimum of the Rayleigh quotient among all non zero functions in the Sobolev space $H^1_0(\Omega\setminus K)$, that is
\[
\la_1(\Omega\setminus K):=\min_{\substack{u\in H_0^1(\Omega\setminus K)\\ u\neq 0}} \frac{\int_{\Omega\setminus K} |\nabla u(x)|^2\,dx}{\int_{\Omega\setminus K} u(x)^2\, dx}.
\]
Essentially, we are imposing the Dirichlet condition over $\partial \Om$ and over a \emph{supplementary} region $K$, of possibly positive measure, and we look for the best {obstacle}, both in shape and location, which optimizes this eigenvalue. 
Similar problems in this spirit were tackled by Ramm and Shivakumar in \cite{ram-shi}, Harrel, Kr\"oger and Kurata in \cite{hakrku}, Kesavan in \cite{Kes}, and by El Soufi and Kiwan in \cite{esokiw}. However, in these papers the authors considered obstacles of \emph{fixed} shape, trying to optimize the first eigenvalue only by means of \emph{rigid motions}, translating or rotating the obstacle. Of course, the results that can be obtained dealing with a wider class of admissible obstacles, whose shapes can vary,  are weaker than those obtained in the
previous papers and it is hopeless to expect a universal solution (in general the shape and the location of an optimal obstacle depends on the domain $\Om$). More recently, Tilli and Zucco have treated in \cite{tilzuc, tilzuc2} a related problem for the first eigenvalue of an elliptic operator in divergence form, in the restricted class of one-dimensional obstacles (i.e., sets of finite one-dimensional Hausdorff measure). 

{As additional constraints, in the optimization problem above, we are interested in \emph{area} or \emph{perimeter}.  
For the area we use the Lebesgue measure, while for the perimeter we must choose it careful.}
Indeed, since objects of positive capacity but of
Lebesgue measure zero influence the first eigenvalue but are not seen by the {classical perimeter} (as defined by De Giorgi),
we need to choose another notion of perimeter, more sensitive to one-dimensional objects. We want to work with 
a perimeter that
\begin{itemize}
\item[(a)]  coincides with the classical notion of perimeter on \emph{regular} sets;
\item[(b)] measures twice the length of \emph{one-dimensional} objects;
\item[(c)] is \emph{lower semicontinuous} with respect to some convergence.
\end{itemize}
Item (a) is natural and allows to use results available on the classical perimeter. Item (b) rules out thin competitors (for instance it penalizes segments), helping for the identification of solutions in specific domains. Item (c) is crucial for the existence in the optimization problems with perimeter constraint. 
Therefore, we are led to consider the following quantity (see \cite{amcovi}). 

\begin{definition}[Outer Minkowski content]\label{minkowski}
Let $K\subset \mathbb R^2$ be a closed set. The \emph{upper} and \emph{lower outer Minkowski contents} $\mi_*(K)$ and $\mi^*(K)$ of $K$ are defined, respectively, as 
\begin{equation*}
\mi_*(K):=\liminf_{\epsilon\to 0}\frac{\leb(K^{\oplus\epsilon}\setminus K)}{\epsilon} \quad \text{and}\quad \mi^*(K):=\limsup_{\epsilon\to 0}\frac{\leb(K^{\oplus\epsilon}\setminus K)}{\epsilon},
\end{equation*}
where $K^{\oplus\epsilon}:=\{x\in \R^2:\, \mathrm{d}_K(x)\leq \epsilon\}$ is the $\epsilon$-tubular neighborhood of $K$ through the distance function $\mathrm{d}_K$ to $K$.
If $\mi_*(K) = \mi^*(K)$ their common value is denoted by $\mi(K)$ and called \emph{outer Minkowski content} of $K$.
\end{definition} 
In Section~\ref{sec.2} we prove that the outer Minkowski content of any compact, \emph{connected} and non empty set of the plane always exists and that satisfies the three items listed above. Notice that, while items (a) and (b) are well-known to be satisfied by the outer Minkowski content (see \cite{amcovi}), this is not the same for the third one (we have not found any proof in the literature). 
More specifically, we prove the lower semicontinuity of the outer Minkowski content with respect to the Hausdorff convergence. Moreover, we use this result to answer a question posed by Cerf in \cite{cerf}: we show that the outer Minkowski content is equal to the \emph{Hausdorff lower semicontinuous envelope of the classical perimeter}.

We are now ready to formulate four relevant problems for the first Dirichlet eigenvalue on a bounded domain $\Om\subset\R^2$, depending on the type of optimization (\emph{minimization} or \emph{maximization}) and the kind of constraint (\emph{area} or \emph{perimeter}); the class of admissible obstacles will be chosen accordingly to guarantee existence of solutions. Then, in the rest of the paper, we will focus on the last of these four problems. Notice that we do not require any regularity of the boundary $\partial \Om$.

\subsection*{{Problem 1: minimizing the first eigenvalue with an obstacle of given area}.}

For a fixed $A\in (0, \leb(\Om))$, with $\leb$ the Lebesgue measure, consider the \emph{minimization} problem
\begin{equation}\label{prob1}
\min \{ \la_1(\Om\setminus K) : \; 	\text{$K\subset \overline{\Om}$, $K$ closed,   $\leb(K)=A$}\}.
\end{equation}
This problem is related to the minimization of the first eigenvalue among \emph{open} sets constrained to lie in a given {box} (and also with a given area), see \cite[Section 3.4]{hen}. Indeed, passing to the complementary set $O=\Om\setminus K$ problem \eqref{prob1} becomes equivalent to the minimization of $\la_1(O)$ among open sets $O\subseteq \Om$ of area $\leb({\overline{\Om}})-A$ (in this framework $\Om$ represents the box). 
Therefore, from what is known on the minimizers contained into a box, we infer the existence of a solution of \eqref{prob1} and some of its qualitative properties. 
We have to distinguish two cases, depending on the existence of disks of area $\leb(\overline{\Om})-A$ that are contained inside $ \Om$ (to this aim we introduce the inradius  $\rho(\Omega)$ of $\Om$).
\begin{itemize}
\item[-] Let $A\geq \leb(\overline{\Om})-\pi \rho(\Om)^2$. Thanks to the Faber-Krahn inequality, a closed set $\K$ minimizes \eqref{prob1} if and only if $\K=\overline{\Om}\setminus B$ with the \emph{open} set $B$ that is (up to sets of capacity zero) any disk in $\Om$ of area $\leb(\overline{\Om})-A$.
This imply that, in general, problem \eqref{prob1} does not have a unique solution.
\item[-] Let $A< \leb(\overline{\Om})-\pi \rho(\Om)^2$. By \cite[Theorem 3.4.1]{hen}, \cite{bri-lam} and \cite{hen-oud}, every minimizer $\K$ of \eqref{prob1} touches the boundary of $\Om$, its free boundary (i.e., the part of the boundary of $\K$ which is inside $\Omega$) is analytic and does not contain any arc of circle.
\end{itemize}

\subsection*{Problem 2: maximizing the first eigenvalue with an obstacle of given~area.}

The corresponding \emph{maximization} problem of \eqref{prob1} has no solutions. Indeed, one can construct a sequence of closed sets $K_n\subset\overline\Om$ of Lebesgue measure $A$ so that  $\la_1(\Om\setminus K_n)\uparrow \infty$ as $n\to\infty$ (for instance by taking $K_n$ as the union of a \emph{given} closed set in $\overline \Om$ of area $A$ with a curve filling $\overline \Omega$ as $n$ increases, see \cite{tilzuc, tilzuc2} where the limit distribution in $\overline{\Om}$ of such curves is studied in detail). 
To guarantee the existence of a maximizer one needs to prevent maximizing sequences to spread out over $\overline\Om$. This can be achieved by imposing stronger geometrical constraints on the class of admissible obstacles (notice that connectedness is still not sufficient). 
Therefore, for a fixed  $A\in (0, \leb(\Om))$, we are led to consider the maximization problem
\begin{equation}\label{prob2}
\max \{ \la_1(\Om\setminus K) : \;	 \text{$K\subset \overline{\Om}$, $K$ closed and convex,   $\leb(K)=A$}\}.
\end{equation}
Now, the existence of a maximizer in the restricted class of convex sets is straightforward (see \cite{bucbut,hen}). Moreover, as convexity seems necessary for the existence, it is natural to expect every solution of \eqref{prob2} to \emph{saturate} 
the convexity constraint, in the sense that the boundary of any solution should contain non-strictly convex parts. 
In particular, it would be interesting to know whether this maximization problem has only polygonal sets as solutions, see \cite{lam-nov}, \cite{lam-nov-pie} for results in this direction for shape optimization problems with convexity constraints.

\subsection*{{Problem 3: minimizing the first eigenvalue with an obstacle of given perimeter}.}
The corresponding minimization problem of \eqref{prob1} with the area constraint replaced by a \emph{perimeter} constraint (whatever notion of perimeter one would consider) is in general not well-posed. Indeed, for every $L>0$, one can construct a sequence of smooth connected and closed sets $K_n\subset\overline\Om$ of perimeter $L$ approaching a subset of $\partial \Om$ so that $\la_1(\Om\setminus K_n)\downarrow \la_1(\Om)$ as $n\to\infty$ (notice that by regularity there is no doubt on the notion of perimeter of $K_n$).  
Therefore, as in Problem 2,  we restrict the class of admissible obstacles to convex sets. 
For a fixed $L\in (0,\mi(\overline{\Om}))$ (the existence of $\mi(\overline{\Om})$ will be provided by Lemma~\ref{Mchar}),  consider the {minimization} problem
\begin{equation}\label{prob3}
\min \{ \la_1(\Om\setminus K) : \; 	\text{$K\subset \overline{\Om}$, $K$ closed and convex,   $\mi(K)=L$}\}.
\end{equation}
The existence of a minimizer is a consequence of the compactness of the class of convex sets and of the continuity of $\mi$ w.r.t Hausdorff convergence of convex sets  (see \cite{bucbut,hen} and recall \eqref{regular} with \eqref{curve} of this paper). 
Notice that, for particular domains $\Om$ and small values $L$, it is still possible to have trivial solutions. 
For example, if the boundary $\partial\Om$ contains a segment and if $L$ is smaller
than twice the length of such a segment, then every segment contained in $\partial \Om$ of outer Minkowski content $L$ minimizes \eqref{prob3}.
On the other hand, if $L$ is large enough,
every minimizer has positive Lebesgue measure, since minimizing sequences will not be able to degenerate to a segment. In any case, one expects that every minimizer of \eqref{prob3} touches the boundary $\partial \Om$.

\subsection*{{Problem 4: maximizing the first eigenvalue with an obstacle of given perimeter.}} 
This is the problem that we analyze in detail in this paper. For a fixed $L\in(0,\mi(\overline \Om))$ (the existence of $\mi(\overline{\Om})$ will be provided by Lemma~\ref{Mchar}) we study the \emph{maximization} problem
\begin{equation}\label{problem}
\max\{\la_1(\Om\setminus K):\: \text{$K\subseteq \overline{\Om}$, $K$ continuum, $\mi(K)\leq L$}\},
\end{equation}
where, as usual, the word \emph{continuum} (continua for the plural) stands for a compact, connected and non empty set.
Notice that the class of admissible obstacles is very wide: a generic obstacle can be split into two pieces, a part of positive Lebesgue measure (\emph{the body}) and a part of Lebesgue measure zero (\emph{the tentacles}). Here connectedness of the admissible obstacles combined with the perimeter constraint, prevents maximizing sequences to spread out over $\overline\Om$ and it is sufficient for the existence of a solution (compare with Problem 2). Moreover, the assumption $L<\mi(\overline \Om)$, prevents to have non trivial solutions, otherwise by \eqref{regular} one could take $K=\overline{\Omega}$ and $\la_1(\emptyset)=\infty$. Notice that we have chosen the inequality in the perimeter constraint, {even though in the case of equality existence holds as well. }

In Section~\ref{sec.3}, for general bounded domains $\Om$, we discuss some of the qualitative properties of the maximizers of \eqref{problem}, moving toward existence, regularity and geometry.
Clearly any solution of \eqref{problem} depends on the geometry of the domain $\Omega$ and this is the reason why it is hard to find explicit solutions.  However, when the domain $\Omega$ has a specific shape, such as a disk, a ring, or more generally, a disk with convex holes, we are able to go beyond qualitative results. In Section~\ref{sec.4} we prove symmetry and, in some cases non symmetry results, identifying the solution for certain values of the constraint (actually for all values when the domain is itself a disk).
It is worth mentioning that when $\Om$ is a ring appears the not so common phenomenon of \emph{symmetry breaking}: for certain values of the constraint every maximizer is not radially symmetric.

\subsection*{Aknowledgements} 
The authors want to warmly thank the anonymous referee whose great work allows to significantly improve the
preliminary version of this paper.
The work of Antoine Henrot is supported by the project ANR-12-BS01-0007-01-OPTIFORM {\it Optimisation de formes}
financed by the French Agence Nationale de la Recherche (ANR). The work of Davide Zucco is  supported by the project 2015 
%\href{http://fcm2.weebly.com/}
{\it Fenomeni Critici nella Meccanica dei Materiali: un Approccio Variazionale} financed by the INdAM-GNAMPA.
This work has been developed at the Institut \'Elie Cartan de Lorraine and at the Scuola Internazionale Superiore di Studi Avanzati di Trieste: these institutions are kindly acknowledged for their warm hospitality.

\section{Some new properties of the outer Minkowski content}\label{sec.2}

In this section we prove some new properties of the outer Minkowski content (see Definition~\ref{minkowski}) on \emph{continua} (i.e., compact, connected, non empty sets) of the plane, such as its lower semicontinuity with respect to the Hausdorff convergence and the fact that it is equal to the Hausdorff lower semicontinuous envelope of the classical perimeter.

We start by fixing the notation and recalling some preliminary facts. Given a set $U\subset \R^2$ we denote by $\mathrm{int}(U)$, $\partial U$ and $\overline U$, respectively, the {interior}, the boundary and the closure of $U$. We will use $K_n\to^{H} K$ to denote a sequence $\{K_n\}$ of closed sets converging with respect to the Hausdorff convergence to a closed set $K$ as $n\to \infty$. Recall that the Hausdorff convergence preserves both connectedness and convexity (see \cite{ambtil, henpie} for other facts about the Hausdorff convergence). A \emph{residual domain} of $K$ in a set $U\subset \mathbb R^2$ is a connected component of $U \setminus K$. We recall that every residual domain of a continuum in $\mathbb R^2$ is simply connected and has a connected boundary (see \cite{cerf}). This implies that if a continuum $K$ has $k$ residual domains (with $k\in\mathbb N$) then $\partial K$ has at most $k$ connected components.

Now we recall the link of the outer Minkowski content of \emph{regular} continua with more classical quantities, such as the De Giorgi perimeter $\mathcal P$, the Hausdorff measure $\haus$ and the Minkowski content $\mathcal M$ (defined as its outer counterpart in Definition~\ref{minkowski} but computing the limits as $\epsilon\to 0$ of the ratio $\leb(K^{\oplus\epsilon})/(2\epsilon)$). 
Let $K\subset \R^2$ be compact. 
\begin{itemize}
\item[(a)] If $K$ is \emph{Lipschitz} (i.e., $K$ is locally the subgraph of a Lipschitz function near every boundary point of $K$) then
\begin{equation}\label{regular}
\mi(K)=\mathcal P(K)=\haus(\partial K)<+\infty.
\end{equation}
\item[(b)] If $K$ is \emph{$1$-rectifiable} 
(i.e., $K$ is the image of a compact subset of the real line through a Lipschitz function from $\R$ to $\R^2$) then
\begin{equation}\label{curve}
\mi(K)=2\mathcal{M}(K)=2\haus(\partial K)<+\infty.
\end{equation}
\end{itemize}
Item (a) has been proved in \cite[Corollary~1]{amcovi} (see also \cite{amfupa}). 
The first equality in item (b) is trivial since $\mi$ and $2\mathcal M$, by definition, differs only on sets of positive Lebesgue measure; the second equality has been proved in \cite[p. 275]{fed}. 

The lower semicontinuity of the outer Minkowski content with respect to the Hausdorff convergence is more tricky. We dedicate the rest of this section for its proof with some of its consequences, since it is interesting on its own and, to the best of our knowledge, new. 
To prove this result we adapt some ideas developed in \cite{buczol2} that were tailored for the so-called density perimeter. 
 
\begin{lemma}\label{approximate}
Let $K\subset \mathbb R^2$ be a continuum. Then there exists a sequence of continua $\{K_n\}$ such that $K_n$ is a finite union of segments and $K_n\to^H K$. 
\end{lemma}
\begin{proof}
We recall the construction given in the proof of \cite[Theorem~4.1]{buczol2}). Take a covering of $K$ given by open disks of radius $1/n$. By compactness the covering can be provided by a finite number of disks. Moreover, by connectedness this family of open disks can be considered connected. Then define the continuum $K_n$ as the family of all the segments connecting any two centers of those disks of this covering with non-empty intersection. By definition, $K_n$ satisfied what claimed.
\end{proof}

We will need the following characterization of the outer Minkowski content.

\begin{lemma}\label{Mchar}
Let $K\subset \R^2$ be a continuum. Then the outer Minkowski content of $K$ exists and
\begin{equation}\label{minkowski2}
\mi(K)=\sup_{\epsilon>0} \bigg[\frac{\leb(K^{\oplus\epsilon}\setminus K)}{\epsilon}-\pi\epsilon\bigg].
\end{equation}
\end{lemma}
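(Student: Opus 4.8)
The plan is to deduce both assertions---existence of the limit in \eqref{minkowski} and the supremum formula \eqref{minkowski2}---simultaneously from a single concavity property of the auxiliary function
\[
\psi(\epsilon):=\leb(K^\epsilon)-\pi\epsilon^2,\qquad \epsilon\ge 0.
\]
Since $K\subseteq K^\epsilon$ we have $\leb(K^\epsilon\setminus K)=\leb(K^\epsilon)-\leb(K)$ and $\psi(0)=\leb(K)$, so a direct computation gives
\[
\frac{\leb(K^\epsilon\setminus K)}{\epsilon}-\pi\epsilon=\frac{\psi(\epsilon)-\psi(0)}{\epsilon},
\]
i.e. the bracket in \eqref{minkowski2} is exactly the slope of the chord of $\psi$ joining $0$ to $\epsilon$. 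The first---and essentially only---thing I would prove is therefore that \emph{$\psi$ is concave on $[0,\infty)$}. Granting this, the chord slopes of a concave function issued from the fixed endpoint $0$ are non-increasing in $\epsilon$, so their supremum over $\epsilon>0$ is attained in the limit $\epsilon\to 0^+$, the right derivative $\psi'(0^+)$ exists in $(-\infty,+\infty]$, and
\[
\sup_{\epsilon>0}\frac{\psi(\epsilon)-\psi(0)}{\epsilon}=\lim_{\epsilon\to 0^+}\frac{\psi(\epsilon)-\psi(0)}{\epsilon}=\lim_{\epsilon\to 0^+}\frac{\leb(K^\epsilon\setminus K)}{\epsilon},
\]
the last equality holding because $\pi\epsilon\to0$. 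This is at once the existence of the defining limit and identity \eqref{minkowski2}.

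To prove the concavity of $\psi$ I would differentiate. Writing $d(\cdot,K)$ for the distance function, one has $K^\epsilon=\{d\le\epsilon\}$ and $|\nabla d|=1$ a.e.\ on $\{d>0\}$, while $\{0<d\le\epsilon\}=K^\epsilon\setminus K$; hence the coarea formula gives
\[
\leb(K^\epsilon)=\leb(K)+\int_0^\epsilon P(t)\,dt,\qquad P(t):=\haus(\partial K^t).
\]
Thus $\psi$ is locally Lipschitz with $\psi'(\epsilon)=P(\epsilon)-2\pi\epsilon$ for a.e.\ $\epsilon$, and the concavity of $\psi$ is equivalent to the statement that $t\mapsto P(t)-2\pi t$ is non-increasing; equivalently, \emph{the perimeter of the outer parallel sets of $K$ grows at rate at most $2\pi$}.

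This last inequality is where connectedness enters and is the heart of the matter. For $t>0$ the set $K^t=K\oplus\overline{B}(0,t)$ is compact and connected (being the parallel body of a continuum), and its boundary is rectifiable, with an outward curvature bounded by $1/t$ together with at most countably many inward-pointing singular vertices lying on the medial axis. The rate of change of $P$ under the outward unit-speed flow is the total boundary curvature, which by the Gauss--Bonnet theorem equals $2\pi\chi(K^t)$; since $K^t$ is connected, $\chi(K^t)=1-(\text{number of holes})\le 1$, and the inward vertices contribute only negatively, so $\tfrac{d}{dt}P(t)\le 2\pi$ as required. Connectedness is genuinely essential: for $K$ equal to two distant points one has $\chi=2$, rate $4\pi$, and indeed $\psi$ fails to be concave. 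The main obstacle is to make this curvature computation rigorous despite the lack of smoothness and the downward jumps of $P$ as holes of $K^t$ close up. I would handle it either through Federer's curvature measures for sets of positive reach (every outer parallel set has positive reach), or---more in the spirit of this section---by first proving the bound for a finite union of segments, where $\partial K^t$ consists of straight pieces of constant length and of circular arcs whose total turning is $\le 2\pi$ by connectedness, so the claim is elementary, and then passing to the limit via Lemma \ref{approximate}, checking that $\leb(K_n^\epsilon)\to\leb(K^\epsilon)$ for each fixed $\epsilon$ so that concavity is inherited in the limit.
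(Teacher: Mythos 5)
Your second route is, in skeleton, exactly the paper's proof: the paper also reduces the monotonicity of the chord slopes to the case of continua that are finite unions of segments (quoting \cite[Lemma 4.1]{buczol2} instead of redoing the turning-number computation) and then passes to the limit along the approximating sequence of Lemma \ref{approximate}. The genuine difference is \emph{what} you transport to the limit. The cited lemma, and the paper, use only the monotonicity of $\epsilon\mapsto \leb(K_n^\epsilon\setminus K_n)/\epsilon-\pi\epsilon$, i.e.\ of chord slopes issued from $0$; you upgrade this to full concavity of $\psi(\epsilon)=\leb(K^\epsilon)-\pi\epsilon^2$. That stronger statement is precisely what makes your limit passage clean, and it is in fact forced on you: your approximants satisfy $\leb(K_n)=0$, so $\leb(K_n^\epsilon)\to\leb(K^\epsilon)$ gives pointwise convergence of $\psi_n$ only on $(0,\infty)$, with $\psi_n(0)=0\neq\leb(K)=\psi(0)$ whenever $K$ has a body of positive area. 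Chord-slope monotonicity alone does \emph{not} survive this limit (the limit of the $n$-th chord functions is $\leb(K^\epsilon)/\epsilon-\pi\epsilon$, which differs from the desired bracket by the \emph{increasing} term $-\leb(K)/\epsilon$), whereas concavity passes to pointwise limits on $(0,\infty)$ and the endpoint is then recovered because $\leb(K^\epsilon)\downarrow\leb(K)$ makes $\psi$ continuous at $0$, extending concavity to $[0,\infty)$. You should make these two small steps explicit (concavity inherited only on the open half-line; continuity of $\psi$ at $0$), since they are exactly the difficulty that the paper instead handles by its more technical multi-parameter tube argument, subtracting $K_n^\mu$ and letting $\mu,\eta\to 0$, which has the advantage of needing only the weaker chord-slope inequality of \cite{buczol2}.

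Two caveats. First, because you need concavity rather than chord-slope monotonicity in the segment case, you cannot simply quote \cite[Lemma 4.1]{buczol2}; you must actually carry out the Gauss--Bonnet bookkeeping, and as written it is slightly off: for a finite union of segments the total turning of the circular arcs of $\partial K_n^t$ can exceed $2\pi$ (a cross has four caps of total turning $4\pi$); the bound $\tfrac{d}{dt}\haus(\partial K_n^t)\leq 2\pi\chi(K_n^t)\leq 2\pi$ holds because each reentrant corner of exterior angle $-\theta$ contributes $-2\tan(\theta/2)\leq -\theta$ to the length derivative, while the perimeter jumps occurring when holes close or corners collide are downward and so only help concavity. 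Second, in your first route the parenthetical claim that every outer parallel set has positive reach is false in general: what is true is that the closed complement $\overline{\{d_K>t\}}$ has reach at least $t$, and Fu-type regularity of $\partial K^t$ holds only for almost every $t$, so the curvature-measure route must be run on the complement and is less elementary than the segment approximation, which is the one to prefer here. A minor point in the coarea step: the a.e.\ identity involves $\haus(\{d_K=t\})$, which may strictly exceed $\haus(\partial K^t)$ (only the inclusion $\partial K^t\subseteq\{d_K=t\}$ is available), so define $P(t)$ accordingly; this does not affect the argument.
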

\begin{proof}
To prove \eqref{minkowski2} it is sufficient to show that the quantity inside the $\sup$ is non-increasing with respect to $\epsilon$, namely that for every $0<\epsilon<\delta$ 
\begin{equation}\label{claim0}
\frac{\leb( K^{\oplus\delta}\setminus K)}{\delta}-\pi\delta\leq \frac{\leb({K^{\oplus\epsilon}}\setminus K)}{\epsilon}-\pi\epsilon.
\end{equation}
Indeed in this case the sup in \eqref{minkowski2} is a limit and guarantees the existence of the limit in the definition of $\mi$.
We prove \eqref{claim0} by an approximation argument. Let $\epsilon>0$ and $\delta>0$ be fixed. By Lemma~\ref{approximate} there exists a sequence of continua $\{K_n\}$ such that $K_n$  is a finite union of segments and $K_n\to^H K$. Since both $\leb(K_n)=0$ and $\leb(\partial K_n^{\oplus \epsilon})=0$ from the inequality (12) in \cite{buczol2} for every set $K_n$ of the approximating sequence we have
\[
\frac{\leb(K_n^{\oplus\delta} \setminus K_n)}{\delta} -\pi\delta \leq \frac{\leb({K_n^{\oplus\epsilon}} \setminus K_n)}{\epsilon}-\pi\epsilon.
\]
Therefore, by the continuity of the measure on increasing sequences of sets, 
for a fixed $\eta>0$ there exists $\mu\in(0,\epsilon)$ so that
\begin{equation}\label{proof1}
\frac{\leb(K_n^{\oplus\delta} \setminus K_n^{\oplus \mu})}{\delta} -\pi\delta \leq \frac{\leb({K_n^{\oplus\epsilon}} \setminus K_n^{\oplus \mu})}{\epsilon} -\pi\epsilon+\eta.\end{equation}
Now, by definition of the Hausdorff convergence, there exists $n_\mu$ such that 
$K_n\subset K^{\oplus \mu}$ and $K\subset K_n^{\oplus \mu}$ for every $n>n_\mu$. Then $K_n^{\oplus\epsilon}\subset K^{\oplus(\mu+\epsilon)}$, and \eqref{proof1} becomes
\begin{equation}\label{proof2}
\frac{\leb(K_n^{\oplus\delta} \setminus K_n^{\oplus \mu})}{\delta} -\pi\delta\leq \frac{\leb(K^{\oplus(\mu+\epsilon)} \setminus K)}{\epsilon} -\pi\epsilon+\eta.
\end{equation}
Using again the definition of the Hausdorff convergence, for all $\xi$ and $\theta$ with $0<\mu<\xi<\theta<\delta$, there exists $n_{\xi,\theta}$ such that $K^{\oplus \theta}\subset K_n^{\oplus\delta}$ and $K_n^{\oplus\mu}\subset K^{\oplus\xi}$ for every $n>n_{\xi,\theta}$. Then for $n>\max\{n_\mu,n_{\xi,\theta}\}$ the inequality \eqref{proof2} becomes
\[
\frac{\leb(K^{\oplus \theta} \setminus K^{\oplus\xi})}{\delta} -\pi\delta\leq \frac{\leb(K^{\oplus(\mu+\epsilon)} \setminus K)}{\epsilon} -\pi\epsilon+\eta.
\]
Since $$\bigcup_{\mu>0}\bigcup_{\substack{\xi, \theta \\ \mu<\xi<\theta<\delta}} K^{\oplus\theta}\setminus K^{\oplus\xi}=\textrm{int}(K^{\oplus\delta})\setminus K\quad \text{and} \quad \bigcap_{\mu>0} K^{\oplus(\mu+\epsilon)}\setminus K={K^{\oplus\epsilon}}\setminus K$$ the continuity of the measure on monotone (in the sense of set inclusion) sequences of sets with the fact that $\leb(\partial K^{\oplus\delta})=0$ gives
\[
\frac{\leb(K^{\oplus \delta} \setminus K)}{\delta} -\pi\delta\leq \frac{\leb({K^{\oplus \epsilon}} \setminus K)}{\epsilon} -\pi\epsilon+\eta.
\]
By the arbitrariness of $\eta$ we obtain  \eqref{claim0}. This gives \eqref{minkowski2} and then it follows the existence of the limits in Definition \ref{minkowski}.
\end{proof}

The lower semicontinuity of the outer Minkowski content with respect to the Hausdorff convergence is an immediate consequence of Lemma~\ref{Mchar}.

\begin{theorem}\label{Mlsc}
Let $\{K_n\}$ be a sequence of continua in $\R^2$ such that $K_n\to^H K$. Then $K\subset \mathbb R^2$ is a continuum and
\[
\mi(K)\leq \liminf_{n\to\infty} \mi(K_n).
\]
\end{theorem}
\begin{proof}
By \cite[Proposition~2.2.17]{henpie} the Hausdorff convergence preserves connectedness.
Let $\epsilon>0$ be fixed. By definition of the Hausdorff convergence, for every $\delta\in(0,\epsilon)$ there exists $n_\delta$ such that $K_n\subset K^{\oplus\delta}$ and $K\subset K_n^{\oplus\delta}$ for every $n>n_\delta$. This with \eqref{minkowski2} implies that
\[
\begin{split}
\frac{\leb(K^{\oplus\epsilon}\setminus K^{\oplus\delta})}{\epsilon}-\pi\epsilon&\leq\bigg( \frac{\leb{(K_n^{\oplus(\epsilon+\delta)}\setminus K_n)}}{\epsilon+\delta}-\pi(\epsilon+\delta)\bigg)\frac{\epsilon+\delta}{\epsilon}+\frac{\pi\delta(2\epsilon+\delta)}{\epsilon}\\
&\leq \frac{\epsilon+\delta}{\epsilon}\mi(K_n)+\frac{\pi\delta(2\epsilon+\delta)}{\epsilon},
\end{split}
\]
and taking the limit as $n\to\infty$ gives
\begin{equation*}
\frac{\leb(K^{\oplus\epsilon}\setminus K^{\oplus\delta})}{\epsilon}-\pi\epsilon\leq \frac{\epsilon+\delta}{\epsilon}\liminf_{n\to\infty} \mi(K_n)+\frac{\pi\delta(2\epsilon+\delta)}{\epsilon}
\end{equation*}
Letting $\delta\to 0$, since $\bigcup_{\delta>0}(K^{\oplus\epsilon}\setminus K^{\oplus\delta})=K^{\oplus\epsilon}\setminus K$, the continuity of the measure on increasing sequences of sets yields
\[
\frac{\leb(K^{\oplus\epsilon}\setminus K)}{\epsilon}-\pi\epsilon\leq \liminf_{n\to\infty} \mi(K_n).
\]
Taking the supremum in $\epsilon>0$, by Lemma~\ref{Mchar} we obtain the thesis.
\end{proof}

We use Theorem~\ref{Mlsc} to answer a question posed by Cerf in \cite{cerf}, where the following quantity has been studied in detail.

\begin{definition}[Hausdorff lower semicontinuous envelope of the perimeter] \label{envelope} 
\sloppy Let  $K\subset \mathbb R^2$ be a continuum. The \emph{Hausdorff lower semicontinuous envelope of the classical perimeter} $\mathcal S(K)$ of the set $K$ is defined as
\begin{equation*}
S(K):=\inf\big\{\liminf_{n\to\infty}\haus(\partial K_n):\, K_n \text{ Lipschitz continuum in $\R^2$},\, K_n\to^H K\big\}.
\end{equation*}
\end{definition}
In \cite{cerf} the Hausdorff lower semicontinuous envelope of the classical perimeter has been characterized as follows:
\begin{equation*}
\per(K)=\sup_{\mathcal U}\sum_{U\in\mathcal U}\sum_{O\in \mathcal C(K,U)}\haus(\partial O\setminus\partial U),
\end{equation*}
where $\mathcal C(K,U)$ is the collection of all residual domains of $K$ in $U$ and the supremum is taken over all families $\mathcal U$ of pairwise disjoint domains of $\R^2$. 
In the introduction of \cite{cerf} Cerf pointed out the interesting question of compare $\mathcal S$ with other classical quantities, like for instance the De Giorgi perimeter or the Minkowski content. 
In the following corollary we show that it coincides with the {outer} Minkowski content on continua of the plane.

\begin{corollary}\label{cor.cerf}
Let $K\subset \R^2$ be a continuum. Then the following equality holds:
\[
\per(K)=\mi(K).
\]
\end{corollary}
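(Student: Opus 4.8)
The plan is to combine two ingredients: the identification $\per(K)=S(K)$ established by Cerf in \cite{cerf} (so that, $S$ being the envelope defined in \eqref{envelope}, it suffices to prove $S(K)=\mi(K)$), and the lower semicontinuity obtained in Theorem \ref{Mlsc}. I would then prove the two inequalities $\mi(K)\le S(K)$ and $S(K)\le\mi(K)$ separately.

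For the inequality $\mi(K)\le S(K)$, let $\{K_n\}$ be any sequence of Lipschitz continua with $K_n\to^H K$. Since each $K_n$ has Lipschitz boundary, \eqref{regular} gives $\haus(\partial K_n)=\mi(K_n)$, and Theorem \ref{Mlsc} yields $\mi(K)\le\liminf_n\mi(K_n)=\liminf_n\haus(\partial K_n)$. As this holds for every admissible sequence, taking the infimum over all of them gives $\mi(K)\le S(K)=\per(K)$.

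For the reverse inequality $S(K)\le\mi(K)$ the natural competitors are the tubular neighborhoods themselves. Each $K^t$ is a continuum (connectedness follows by joining every point to a nearest point of $K$ through a segment lying in $K^t$) and $K^t\to^H K$ as $t\to0$, since $d_H(K^t,K)=t$. Applying the coarea formula to the $1$-Lipschitz function $d(\cdot,K)$, whose gradient has modulus $1$ almost everywhere on $K^\epsilon\setminus K$, gives
\[
\leb(K^\epsilon\setminus K)=\int_0^\epsilon\haus(\partial K^t)\,dt ,
\]
so that the averages $\frac1\epsilon\int_0^\epsilon\haus(\partial K^t)\,dt$ converge to $\mi(K)$ by Lemma \ref{Mchar}. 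From this one reads off $\liminf_{t\to0}\haus(\partial K^t)\le\mi(K)$: if the liminf exceeded $\mi(K)$, the averages would stay bounded below away from $\mi(K)$ for small $\epsilon$, a contradiction. Choosing $t_n\downarrow0$ along which $\haus(\partial K^{t_n})$ converges to this liminf and using $K^{t_n}\to^H K$ as competitors in \eqref{envelope} would give $S(K)\le\liminf_n\haus(\partial K^{t_n})\le\mi(K)$, closing the argument.

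The main obstacle is the \emph{admissibility} of the competitors $K^{t_n}$: the definition of $S$ in \eqref{envelope} requires Lipschitz continua, whereas the level sets $\{d(\cdot,K)=t\}=\partial K^t$ of the distance function need not have Lipschitz boundary for every $t$ (pinch points can occur at isolated levels). I would resolve this either by invoking a regularity result guaranteeing that $K^t$ has Lipschitz boundary for almost every $t$, so that $t_n$ can be chosen among such good levels, or, to remain self-contained, by inserting a regularization step: each $K^{t_n}$ is a \emph{fat} compact connected set with nonempty interior, being a union of closed disks of radius $t_n$, and can therefore be approximated in the Hausdorff distance by Lipschitz continua $L_n$ with $\haus(\partial L_n)\le\haus(\partial K^{t_n})+1/n$; a diagonal argument then replaces $K^{t_n}$ by $L_n$ without affecting the estimate.
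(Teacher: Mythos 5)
Your proposal follows essentially the same route as the paper: the easy inequality $\mi(K)\le\per(K)$ via Theorem \ref{Mlsc} and \eqref{regular} is identical, and for the reverse inequality the paper likewise uses the tubular neighborhoods $\{d_K\le\epsilon\}$ as competitors, the coarea formula to get $\liminf_{\epsilon\to 0}\haus\big(\partial\{d_K\le\epsilon\}\big)\le\mi(K)$, and a diagonal argument (justified by metrizability of the Hausdorff distance, via the Blaschke selection theorem) to produce a single approximating sequence of Lipschitz continua. One small inaccuracy: the coarea formula for the $1$-Lipschitz function $d_K$ produces the level sets $\{d_K=t\}$, not the topological boundaries $\partial K^t$, so your identity $\leb(K^\epsilon\setminus K)=\int_0^\epsilon\haus(\partial K^t)\,dt$ is not correct as stated; but since $\partial\{d_K\le t\}\subseteq\{d_K=t\}$, the inequality you actually need survives, and this inclusion (combined with Fatou's lemma) is exactly how the paper argues in \eqref{proof111}.

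The substantive issue is the one you flag yourself: the admissibility of the competitors. Your claim that $K^{t_n}$, being a union of closed disks, ``can therefore be approximated'' by Lipschitz continua $L_n$ with $\haus(\partial L_n)\le\haus(\partial K^{t_n})+1/n$ does not follow from fatness alone: tangencies between disks create cusp points, and a priori $\partial K^{t_n}$ could have infinitely many connected components, so as written this is an unproven assertion --- and it is precisely the step where the paper does its real work. The paper quotes \cite{cerf} for the fact that $\{d_K\le\epsilon\}$ has countably many residual domains, deduces that $\partial\{d_K\le\epsilon\}$ has countably many $\haus$-rectifiable connected components, and then invokes the segment-approximation contained in the proof of \cite[Theorem 4.4.8]{ambtil} to replace each component by a finite union of segments of smaller length, yielding Lipschitz continua with smaller boundary measure. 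Your alternative fix --- that $K^t$ is a Lipschitz domain for almost every $t$ --- is also viable in the plane (this is a known regularity theorem for level sets of the distance function, due to J.~H.~G.~Fu), but it too requires a citation or proof. In summary, your outline is correct and coincides with the paper's strategy, with the key approximation lemma left as an acknowledged placeholder rather than established.
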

\begin{proof}
Let $\{K_n\}$ be a sequence of Lipschitz continua in $\R^2$ with $K_n\to^H K$. By Theorem~\ref{Mlsc} and \eqref{regular} it follows that
\[
\mi(K)\leq \liminf_{n\to\infty} \mi(K_n)=\liminf_{n\to\infty} \haus(\partial K_n).
\]
By taking the infimum among all sequences $\{K_n\}$ of Lipschitz continua in $\R^2$ with $K_n\to^H K$, recalling Definition~\ref{envelope}, we obtain the inequality $\mi(K)\leq \per(K)$.

For the reverse inequality we may assume $\mi(K)$ to be finite, otherwise the inequality is trivial. Then by the coarea formula  and Fatou's lemma (see (2.74) and Theorem 1.20 in \cite{amfupa}) we have 
\begin{equation}\label{proof111}
\begin{split}
\mi(K)&=\lim_{\epsilon\to0}\frac{1}{\epsilon}\int_0^\epsilon \haus (\{\mathrm{d}_K=t\}) \, dt\geq \int_0^1 \liminf_{\epsilon\to0}\haus (\{\mathrm{d}_K=t\epsilon\}) \, dt
\\ &\geq \liminf_{\epsilon\to0}\haus (\{\mathrm d_K=\epsilon\})\geq \liminf_{\epsilon\to0}\haus (\partial K^{\oplus\epsilon}),
\end{split}
\end{equation}
where the last inequality is a consequence of the inclusion  $\partial K^{\oplus \epsilon}\subseteq  \{\mathrm d_K= \epsilon\}$. For every $\epsilon>0$ the set $K^{\oplus\epsilon}\subset \R^2$ is a continuum (the connectedness follows from the one of $K$) and $K^{\oplus \epsilon}\to^H K$ as $\epsilon\to 0$. 
If the sets $K^{\oplus \epsilon}$ were Lipschitz then the inequality would follow by Definition~\ref{envelope}, but in general they are not. Nevertheless, we can conclude the proof by a diagonal argument, approximating, in the Hausdorff convergence, each set $K^{\oplus \epsilon}$ by means of smooth sets (we adapt the proof of \cite[Theorem 3.42]{amfupa} to our contest where a similar approximation has been provided, but with a different topology). Indeed, from \cite[Sect. 3.5]{amfupa} we have $\haus (\partial K^{\oplus\epsilon})\geq \mathcal P(K^{\oplus\epsilon})$ and similar to \cite[Theorem 3.42]{amfupa} we can build a sequence $\{K_n^\epsilon\}$ of Lipschitz continua  such that $\mathcal P(K_n^\epsilon)\to \mathcal P(K^{\oplus \epsilon})$ as $n\to\infty$ (here we can deal with closed instead open sets since the perimeter $\mathcal P$ of a set $E$ does not change by modifying $E$ with a set of  Lebesgue measure zero and from \eqref{proof111} $\leb(\partial K^{\oplus\epsilon})=0$ while from the regularity of $K_n^\epsilon$ it holds $\leb(\partial K_n^\epsilon)=0$). Therefore, if we prove that $K_n^\epsilon\to^H K^{\oplus \epsilon}$ as $n\to\infty$, by a standard diagonal argument (by the Blaschke selection theorem \cite[Theorem 4.4.15]{ambtil} the Hausdorff convergence over closed sets is metrizable), 
there exists a subsequence $\{K_{n_j}^{\epsilon_j}\}$ of Lipschitz continua with $K_{n_j}^{\epsilon_j}\to^H K$ as $j\to \infty$ such that 
\eqref{proof111} and Definition~\ref{envelope} yield
\[
\mi(K)\geq \liminf_{j\to \infty}\haus(\partial K_{n_j}^{\epsilon_j})\geq \per(K),
\]
where we also used  \eqref{regular} to say that $\mathcal P(K_{n^j}^{\epsilon_j})=\haus(\partial K_{n^j}^{\epsilon_j})$. 

To conclude it remains to prove the convergence of smooth sets to $K^{\oplus\epsilon}$. Let $\epsilon>0$ be fixed. Similar to \cite[Theorem 3.42]{amfupa}, we can choose the closed counterpart $K_n^\epsilon:=\{\chi_{K^{\oplus \epsilon}}*\rho_n\geq t\}$ where $\rho_n$ is a mollifier with support in the ball $B(0,1/n)$ and $t$ is a suitable fixed real number with $t\in(0,1)$.  Relying on this definition we can prove that $K_n^\epsilon\to^H{K^{\oplus \epsilon}}$ as $n\to\infty$, namely that for every $\delta>0$ there exists $n_\delta$ such that for every $n>n_\delta$ the inclusions $K_n^\epsilon\subset (K^{\oplus\epsilon})^{\oplus \delta}$ and $K^{\oplus\epsilon}\subset (K_n^\epsilon)^{\oplus \delta}$ hold.
Fix $\delta>0$ and let $n_\delta=\lceil 1/\delta\rceil$. For the former inclusion, since $\chi_{K^{\oplus \epsilon}}*\rho_n$ is a non-negative function, for every $n>n_\delta$ we have
\[
K_n^\epsilon\subset \mathrm{supp}(\chi_{K^{\oplus \epsilon}}*\rho_n)\subset \mathrm{supp}(\chi_{K^{\oplus \epsilon}})+\mathrm{supp}(\rho_n)=(K^{\oplus \epsilon})^{\oplus 1/n}\subset (K^{\oplus \epsilon})^{\oplus \delta},
\]
where $\mathrm{supp}$ denotes the support of a function and $+$ the Minkowski sum. For the latter inclusion let $s=1-t$ and notice that $1-\chi_{K^{\oplus \epsilon}}*\rho_n=(1-\chi_{K^{\oplus \epsilon}})*\rho_n$ is again a non-negative function. Then, similarly to the previous inclusions
\[
\mathbb R^2\setminus K_n^\epsilon\subset 
\mathrm{supp}((1-\chi_{K^{\oplus \epsilon}})*\rho_n)\subset \mathrm{supp}(1-\chi_{K^{\oplus \epsilon}})+\mathrm{supp}(\rho_n)=(\mathbb \R^2\setminus K^{\oplus \epsilon})^{\oplus 1/n}.
\]
Since $(\R^2\setminus K^{\oplus \epsilon})^{\oplus 1/n}\subset\R^2\setminus  K^{\oplus (\epsilon-1/n)}$ by passing to the complementary sets and enlarging of $\delta$ we obtain $(K_n^\epsilon)^{\oplus \delta}\supset (K^{\oplus (\epsilon-1/n)})^{\oplus \delta}$. If $n>n_\delta$ it is easy to see that $(K^{\oplus (\epsilon-1/n)})^{\oplus \delta}\supset K^{\oplus \epsilon}$ and the latter inclusion above is also proved. This concludes the proof of the corollary.
\end{proof}

{This corollary may serve as a translator of }similar results independently established for the Hausdorff lower semicontinuous envelope of the perimeter $\per$ in \cite{cerf}  and for the outer Minkowski content $\mi$ in \cite{vil}. In particular, from Corollary~\ref{cor.cerf} one could deduce that, if $K\subset \mathbb R^2$ is a continuum then
\[
\per(K)=\mathcal P(K)+2\haus({\partial K\cap K^0}),
\]
where $K^0$ is the set of points where $K$ has null density (see \cite{cerf, vil}).

In the following we use Corollary~\ref{cor.cerf} to show that the convex hull diminishes the outer Minkowski content of a continuum in the plane and moreover that the outer Minkowski content controls the Hausdorff measure.

\begin{corollary}\label{cor.hull}
Let $K\subset \mathbb R^2$ be a continuum and $\hull(K)$ be the convex hull of $K$. Then 
\[
\mi(\hull(K))\leq \mi(K).
\]
\end{corollary}
\begin{proof}
For a Lipschitz continuum of the plane, it is well-known that the convex hull diminishes the perimeter (see \cite{ferfus} and recall \eqref{regular}). For a general continuum, we proceed by approximation using the characterization of the outer Minkowski content provided in Corollary~\ref{cor.cerf}. Indeed, for every $\eta>0$ we may consider a sequence of Lipschitz continua $\{K_n\}$ such that $K_n\to^H K$ and $\liminf_n \haus (\partial K_n) \leq \mi(K)+\eta$. Since the convex hull is stable with respect to the Hausdorff convergence, that is $\hull(K_n)\to^H \hull(K)$  (see \cite[Exercise 2.5]{henpie}), by Corollary~\ref{cor.cerf} and Definition~\ref{envelope} we obtain
\[
\mi(\hull(K))\leq \liminf_{n\to\infty} \haus(  \partial \hull(K_n))\leq  \liminf_{n\to\infty} \haus(\partial K_n)\leq \mi(K)+\eta.
\]
We get the thesis by the arbitrariness of $\eta$ .  
\end{proof}

\begin{corollary}\label{cor.residual}
Let $K\subset \R^2$ be a continuum with a finite number of residual domains in $\mathbb R^2$. Then
\begin{equation*}%\label{previous.ineq}
\haus(\partial K)\leq \mi(K).
\end{equation*}
\end{corollary}
\begin{proof}
Let $\eta>0$ and denote by $k\in\mathbb N$ the number of residual domains of $K$  in $\mathbb R^2$. From Corollary~\ref{cor.cerf} there exists a sequence of Lipschitz continua $\{K_n\}$ with $K_n\to^H K$ and
\begin{equation}\label{iooioio}
\lim_{n\to\infty} \haus(\partial K_n)\leq \mi(K)+\eta.
\end{equation}
(we choose a subsequence for which the $\liminf$ is a limit).
From the assumption on $K$ each set $K_n$ of the approximating sequence can be chosen to have at most $k$ residual domains in $\mathbb R^2$. This implies that $\partial K_n$ has at most $k$ connected components as well (otherwise its perimeter would be larger). Now, the sequence of closed sets $\{\partial K_n\}$  converges, up to subsequences (not relabelled), to a closed set $J$ with at most $k$ connected components and so that $\partial K\subseteq J\subseteq K$ (the first inclusion follows from Kuratowski convergence, the second one since the inclusion is continuous with respect to the Hausdorff convergence). Therefore, by the monotonicity of the Hausdorff measure and by applying the Go\l ab theorem \cite[Theorem 4.4.17]{ambtil} to each connected component of $\partial K_n$, we obtain that
\[
\haus(\partial K)\leq\haus(J)\leq \liminf_{n\to\infty} \haus(\partial K_n).
\]
Combining this inequality with \eqref{iooioio}, by the arbitrariness of $\eta$, we get the thesis.
\end{proof}

\section{Optimal obstacles: toward existence, regularity and geometry.}\label{sec.3}

We start by proving the existence of a solution to problem \eqref{problem}. Then, we analyze some qualitative properties satisfied by such a solution.

\begin{theorem}
Let $L\in(0,\mi(\overline \Om))$. Then there exists a maximizer $\K$ of \eqref{problem}.
\end{theorem}

\begin{proof}
The existence follows from the direct methods of the Calculus of Variations. Let $\{K_n\}$ be a maximizing sequence of problem \eqref{problem}, so that
\begin{equation}\label{sup}
\la_1(\Om\setminus K_n)\to\sup\{\la_1(\Om\setminus K):\: \text{$K\subseteq \overline{\Om}$, $K$ continuum, $\mi(K)\leq L$}\},
\end{equation}
as $n\to\infty$. By the Blaschke selection theorem there exists a compact set $\K\subset \overline{\Om}$ and a subsequence, not relabelled, such that $K_n\to^H \K$. 
Moreover, by Theorem~\ref{Mlsc}, the set $\K$ is a continuum with $\mi(\K)\leq L$ and thus it is an admissible competitor in \eqref{problem}. Then, by the Sverak continuity result (see \cite{sve}) $\la_1(\Om\setminus K_n)\to \la_1(\Om\setminus \K)$ as $n\to \infty$. This with \eqref{sup} implies that $\K$ solves problem \eqref{problem}. 
\end{proof}

Before giving the first properties of a solution to \eqref{problem}, let us introduce the notion of \emph{local convexity}.

\begin{definition}[Local convexity]%\label{locconvex}
A continuum $K\subset \overline{\Om}$ is said to be \emph{locally convex inside $\Om$} if for every $x\in \partial K\cap \Om$ there exists $r_x> 0$ such that $K\cap \overline{B(x,r_x)}$ is convex.
\end{definition}

Notice that if this definition is satisfied for some radius $r_x>0$ then it is also satisfied for all smaller radius $0<r<r_x$. Moreover, for points in $(K\setminus \partial K)\cap \Om$ there always exists such a radius $r_x$. An interesting result relating local to global convexity goes back to 1928 (see \cite{tie,nak}).

\begin{theorem}[Tietze-Nakajima]\label{tienak}
Let $K\subset \mathbb R^2$ be a continuum that is locally convex inside $\mathbb R^2$. Then $K$ is convex.
\end{theorem}

We also need the following lemma.

\begin{lemma}\label{lemma.minor2}
Let $L>0$ and let $K\subset \overline{\Om}$ be a Lipschitz continuum with $\haus(\partial K)\leq L$. Let $x\in \Om$ and let $r>0$ such that $\overline{B(x,r)}\cap \overline{\Om}$ is convex. Then there exists a continuum $\widehat K$ with the following properties:
\begin{itemize}
\item[-] $\widehat K\subset \overline \Om$, $\widehat K\supset K$ and $\haus(\partial \widehat K)\leq \haus(\partial K)$;
\item[-] $\widehat K \cap \overline{B(x,r/2)}$ is the union of $N$ convex continua with $N\leq \lfloor L/r \rfloor$.
\end{itemize}
\end{lemma}
\begin{proof}
The first condition is trivially satisfied if $\widehat K=K$, but the second one is not in general. Therefore, we construct a set $\widehat K$ satisfying both conditions by taking suitable convex hulls of subsets of $K$.
Let $\{K^i\}$ be the family of those connected components of $K \cap \overline{B(x,r)}$ with non empty intersection with the disk $\overline{B(x,r/2)}$.
We first notice that, for any index $i$, the set $H:=K\cup H^i$ where $H^i:=\hull(K^i)$ is a continuum such that
\begin{equation}\label{areaperimeter}
H\subset \overline \Om, \quad H\supset K, \quad \haus(\partial H)\leq \haus(\partial K).
\end{equation}
The inclusions follow from the fact that $K^i\subset H^i\subset \overline{B(x,r)}\cap \overline{\Om}$. For the inequality on the Hausdorff measure, by writing $\partial K^i=(\partial K^i\setminus \partial H^i)\cup (\partial K^i\cap \partial H^i)$ and $\partial H^i=(\partial H^i\setminus \partial K^i )\cup (\partial H^i\cap \partial K^i)$,
since $\haus(\partial H^i)\leq\haus (\partial K^i)$ (see \cite{ferfus}), we deduce that
\begin{equation}\label{estimateone}
\haus(\partial H^i\setminus \partial K^i)\leq\haus(\partial K^i\setminus \partial H^i).
\end{equation}
Similarly, we write $\partial K=(\partial K\setminus \partial H )\cup (\partial K\cap \partial H)$ {and}  $\partial H=(\partial H\setminus \partial K )\cup (\partial H \cap \partial K)$.
Since $K^i\subset H^i$ and $K\subset H$ we obtain
\[
\partial K^i\setminus \partial  H^i=\partial K^i\cap \mathrm{int}({H^i})\subset \partial K\cap \mathrm{int}({H^i})\subset \partial K\cap \mathrm{int}({H})=
\partial K\setminus \partial H,
\]
and the monotonicity of $\haus$ with \eqref{estimateone} yield
\begin{equation}\label{estimatetwo}
\haus(\partial H^i\setminus \partial K^i)\leq \haus(\partial K^i\setminus \partial H^i)\leq\haus(\partial K\setminus \partial H).
\end{equation}
The inclusion $\partial H\setminus \partial K\subset \partial H^i\setminus\partial K^i$ holds. Indeed, let $x\in\partial H\setminus \partial K$, that is $x\in K\cup H^i$ but $x\notin \mathrm{int}(K\cup H^i)$ and $x\notin \partial K$. 
Then $x\notin \mathrm{int}(K)$ and $x\notin\mathrm{int}(H^i)$. These facts imply that $x\in \partial H^i$ and moreover that $x\notin \partial K^i$.  Therefore, using again the monotonicity of $\haus$ yields $\haus(\partial H\setminus \partial K)\leq\haus(\partial H^i\setminus\partial K^i)$ that combined with the inequality \eqref{estimatetwo} gives \eqref{areaperimeter}. 

Now, by using the monotonicity and the additivity properties of $\haus$ with the connectedness of each component we deduce that the cardinality $N$ of the family $\{K^i\}$ is at most $\lfloor{L}/{r}\rfloor$. Therefore, setting $\widehat K:=\bigcup_{i=1}^N H^i\cup K$ provides a continuum satisfying the properties listed in the statement of theorem. Indeed, the first point of the list follows by induction from \eqref{areaperimeter}. %(each times we add a set $H^i$ either the number of \emph{connected} components or the number of \emph{non convex} components decreases).
For the second one, notice that
\[
\widehat K\cap \overline{B(x,r/2)}=\Big(\bigcup_{i=1}^N H^i\cup K^i\Big)\cap \overline{B(x,r/2)}=\bigcup_{i=1}^N H^i\cap \overline{B(x,r/2)},
\]
that is a finite union of convex continua.
\end{proof}

\begin{theorem}\label{teo.opt}
Let $L\in(0,\mi(\overline \Om))$ and let $\K$ be a maximizer of \eqref{problem}. Then the following properties hold.
\begin{description}
\item[(i)] $\K$ is locally convex inside $\Omega$. Moreover, if $\Om$ is convex then $\K$ is convex as well.
\item[(ii)] The perimeter constraint is saturated, namely $\mi(\K)=L$.
\item[(iii)] If $\overline{\Om}$ has $k$ residual domains in $\mathbb R^2$ with $k\geq 1$  
then $\K$ has at most $k$ residual domains in $\mathbb R^2$.
\end{description}
\end{theorem}
\begin{proof}
In the proofs of the three items we proceed by contradiction.

\smallskip
For (i) assume $\K$ to be not locally convex inside $\Om$. This guarantees the existence of a point $x\in \partial \K\cap \Om$ for which the closed set $\K\cap \overline{B(x,r)}$ is not convex for all $r>0$. Then fix some $r$ so small so that $\overline{B(x,r)}\cap \overline{\Om}$ is convex. The strategy of the proof consists in considering as competitor the continuum $\K\cup \hull(K^1)$ where $K^1$ is the connected component of $\K\cap \overline{B(x,r/2)}$ containing $x$. If $\K\cup \hull(K^1)$ were admissible then, by monotonicity of the first eigenvalue, it would contradict the optimality of $\K$. However, due to the subadditivity of the outer Minkowski content, it is not immediate to reach a contradiction. To overcome this difficulty, by approximation and the technical lemma above, we deduce more information on the set $\K\cap \overline{B(x,r/2)}$ that garantee to have additivity of the outer Minkowski content.

Let $\eta>0$. From Corollary~\ref{cor.cerf} and the estimate $\mathcal{SM}(\K)\leq L$, there exists a sequence of Lipschitz continua $\{K_n\}$ such that $K_n\to^H \K$ and 
\begin{equation*}%\label{boundedbyL}
\haus (\partial K_n)\leq L+\eta,
\end{equation*}
for every $n$ large enough. Without loss of generality, by optimality of $\K$ and Lemma~\ref{lemma.minor2} we can assume $K_n \cap \overline{B(x,r/2)}$ to be the union of at most $\lfloor (L+\eta)/r \rfloor$ convex continua (notice that this bound  is uniform with respect to $n$). Then, since convexity is preserved by Hausdorff convergence, it turns out that the set $\K\cap \overline{B(x,r/2)}$ is the union of a \emph{finite} number of convex continua and, possibly, a non empty subset of $\partial B(x,r/2)$ (this represent the set of those limit points $x$ for which there exists a sequence $\{x_n\}$ such that $x_n\in K_n\setminus \overline{B(x,r/2)}$ and $x=\lim_{n\to\infty} x_n$). 
Now, let $K^1$ be the connected component of $\K\cap \overline{B(x,r/2)}$ containing $x$ and let $K^2:=\K\cap \hull(K^1)\setminus K^1$, that is the intersection of the remaining connected components with $\hull(K^1)$.  
Since $\hull(K^1)\setminus K^1\subset {B(x,r/2)}$ we deduce that $K^2$ has a \emph{finite} number of connected components, thus is compact and the quantity
\begin{equation}\label{wellseparated}
d:=\min_{x\in K^1,\, y\in K^2} |x-y|>0.
\end{equation}
By choosing the radius $\epsilon$ in the definition of $\mi^*(K^1\cup K^2)$ (see Definition~\ref{minkowski}) smaller than the number $d$ as defined in \eqref{wellseparated} and by using Lemma~\ref{Mchar} we obtain the \emph{additivity} of $\mi^*$
\begin{equation}\label{additivity}
\mi^*(\K\cap \hull(K^1))=\mi^*(K^1\cup K^2)= \mi(K^1)+\mi^*(K^2).
\end{equation}
Now, we notice that $K^1$ can not be convex (otherwise for $r<d$ the set $\K\cap \overline{B(x,r)}=K^1\cap  \overline{B(x,r)}$ would be convex, a contradiction with the assumption at the beginning of the proof).
Therefore, the set $\widehat{K}:=\K\cup \hull(K^1)$ strictly contains $\K$ and by monotonicity of the first eigenvalue $\lambda_1(\Omega\setminus \widehat K)>\lambda_1(\Omega\setminus \K)$. By convexity of $\overline{B(x,r)}\cap \overline\Om$ the inclusion $\hull(K^1)\subset \overline{B(x,r)}\cap\overline\Omega$ holds so that $
\widehat K\subset\overline{\Om}$, and this implies that $\mi(\widehat K)>\mi(\K)$ 
(otherwise $\widehat K$ would be a better competitor contradicting the optimality of $\K$).
Plugging this inequality into the \emph{strong subadditivity} of $\mi$ (that can be proved similarly to \cite[p. 739]{amcovi}))
\begin{equation}\label{strongsub}
\mi(\widehat K)+ \mi^*(\K\cap \hull(K^1))\leq \mi(\K)+\mi(\hull(K^1)),
\end{equation}
yields 
$\mi^*(\K\cap \hull(K^1))< \mi(\hull(K^1))$.
This inequality with \eqref{additivity} and Corollary~\ref{cor.hull} applied to $K^1$ gives $\mi^*(K^2)<0$, a contradiction. The set $\K$ must be locally convex inside $\Om$.

The second statement in the case $\Om$ convex follows directly from Corollary~\ref{cor.hull}: $\K$ must be convex otherwise by convexity of $\Om$ the continuum $\hull(K)$ would contradict the optimality of $\K$.

\smallskip
For (ii) let us now assume $\mi(\K)<L$. Consider a segment $\gamma$  so that $\K\cup \gamma$ is connected and $0<\mi(\gamma\setminus \K)\leq L-\mi(\K)$ (this is always possible thanks to \eqref{curve}).Then, by the subadditivity of $\mi$ with respect to union of sets (similar to \eqref{strongsub}) we deduce that $\mi(\K\cup \gamma)\leq L$ while $\la_1(\Omega\setminus(\K\cup\gamma))>\la_1(\Omega\setminus \K)$. This contradicts the optimality of $\K$ and so every maximizer of \eqref{problem} saturates the perimeter constraint.

\smallskip
At last for (iii) let us assume that $\K$ has more than $k$ residual domains in $\mathbb R^2$. Since at most $k$ different residual domains of $\K$ can contain those of $\overline \Om$, all the other ones must be contained in $\Omega$. Let $\omega\subset \Omega$ be such a residual domain of $\K$ and define the continuum $\widehat K=\K\cup \omega$. Then $\widehat K\subset \overline{\Omega}$, $\widehat K\supset \K$ and since $({\widehat K}^{\oplus\epsilon} \setminus \widehat K)\subset (\K^{\oplus\epsilon} \setminus \K)$, we also have $\mi(\widehat K)\leq \mi(\K)$. This provides a better competitor in \eqref{problem} contradicting the optimality of $\K$. Therefore, $\K$ has at most $k$ residual domains. 
\end{proof}

\begin{remark}
If $\overline{\Omega}$ is Lipschitz then the local convexity of $\K$ can be proved \emph{up to the boundary $\partial \Om$}.
Precisely, for every vertex $x\in \partial \K$, height $r>0$, direction $\xi\in \mathbb S^1$, and opening angle $\theta\in (0,2\pi]$ such that the intersection of the cone $\overline{C(x, r , \xi, \theta )}$ with $\overline{\Omega}$ is convex (the existence of a 4-tuple satisfying this condition is guaranteed by the regularity of $\Om$) then the set $\K\cap \overline{C(x,r,\xi,\theta)}$ is convex as well. This can be proved simply by replacing in the proofs above disks with cones.
\end{remark}
\begin{remark}
We will see in the situation of the ring, where $\overline{\Omega}$ has exactly 2 residual domains in $\mathbb R^2$, that the two possibilities $\K$ has 1 or 2 residual domains in $\mathbb R^2$ actually happen, depending on the perimeter constraint (see Theorem~\ref{casering} below).
\end{remark}

Now, we focus on the regularity and the geometry of the \emph{free boundary} of $\K$ (the part of the boundary which is inside $\Om$).

\begin{theorem}\label{teo.reg}
The free boundary of $\K$ is of class $C^\infty$, that is for every $x\in \partial \K\cap \Om$ there exists $r_x>0$ such that $\partial \K\cap \overline{B(x,r_x)}$ is the graph of a concave $C^\infty$ function.
Moreover, for every $x\in \partial \K\cap \Om$ the following properties hold.
\begin{description}	
\item[(i)] If $x\in \partial\omega$, with $\omega$ a residual domain of $\K$ in $\Omega$ and $\la_1(\omega)>\la_1(\Omega\setminus\K)$, then  $\partial \K\cap \overline{B(x,r_x)}$ is a segment.
\item[(ii)] If $\partial \K\cap \overline{B(x,r_x)}$ is not a segment then the \emph{optimality condition} holds
\begin{equation}\label{optimality}
|\nabla u_1(x)|^2=\mu  \mathcal{C}(x),
\end{equation}
where $u_1$ is the first eigenfuction corresponding to $\la_1(\Om\setminus \K)$, $\mathcal{C}(x)$ is the curvature of the free boundary of $\K$ at the point $x$
and $\mu>0$ is a Lagrange multiplier which may depend on the connected component of $\partial\K \cap\Om$ but is the same for all the points on this
connected component.
\item[(iii)] If $\partial \K\cap \overline{B(x,r_x)}$ is an arc of circle then $\Omega\setminus \K$ is a ring. In particular, if $\K$ is a disk then $\Omega$ is a disk concentric to $\K$.
\end{description}	
\end{theorem}

\begin{proof}
Fix $x\in \partial \K\cap \Om$ and let $\gamma:=\partial \K\cap \overline{B(x,r_x)}$ be the graph of a concave function for some fixed radius $r_x>0$. This is always possible thanks to item (i) of Theorem \ref{teo.opt}. The regularity of $\gamma$ is quite classical: this follows by using a Schauder's regularity result with a bootstrap argument (see, e.g., \cite{chalar} or also the proof of Theorem 2.2 in \cite{bubuhe}). 
In our problem (by contrast with what happens in \cite{bubuhe})
the domain is not convex but is the complement of a locally convex set inside $\Omega$.
Nonetheless, classical regularity results in the plane imply that $|\nabla u_1|^2\in L^p(\gamma)$ for some $p>1$ (see for instance \cite{jerken}).  This is enough to
start the bootstrap argument and to follow the same line as in \cite{bubuhe} to get the regularity of the free boundary.
\smallskip

For (i) if it is not the case then $\gamma$ is strictly convex somewhere and it is possible to decrease the perimeter of this connected component without changing the eigenvalue $\lambda_1(\Omega\setminus \K)$ contradicting item (ii) in Theorem~\ref{teo.opt}. 

\smallskip
To prove (ii), 
by the previous point (i), observe that the point $x\in \partial\omega$ for some residual domain $\omega$ of $\K$ in $\Omega$ with $\la_1(\omega)=\la_1(\Omega\setminus\K)$. 
Now, assume that $\gamma$ is modified by a regular vector field $x\in \R^2\mapsto I(x)+tV(x)$ where $t>0$, $I$ is the identity map from $\R^2$ to $\R^2$ while $V\in C^2(\R^2;\R^2)$ has compact support inside $B(x,r_x)$. Then by \cite[Proposition 4.13]{vil} we can use the shape derivative of the classical perimeter \cite[Corollary 5.4.16]{henpie} to obtain that
\[
\mi\left((I+tV)(\K)\right)=\mi(\K)+t\int_{\gamma}\mathcal C\,V. n\, d\haus +o(t),\quad  \text{as $t\to 0$},
\]
where $n$ is the normal to $\gamma$ pointing toward $\Omega\setminus \K$ (which exists everywhere by regularity of $\gamma$).
Since by assumption the curvature $\mathcal C$ is \emph{positive} over a subset of $\gamma$  we can always consider vector fields $V$ decreasing the total perimeter of $\K$, namely so that
\begin{equation}\label{variationp}
\int_{\gamma} \mathcal C\,V.n\, d\haus \leq 0.
\end{equation}
On the other hand, the shape derivative of the first eigenvalue \cite[Theorem 5.7.1]{henpie} implies the following expansion
\begin{equation}\label{expansione}
\lambda_1\left((I+tV)(\omega)\right)=\lambda_1(\omega)+t \int_{\gamma}|\nabla u_1|^2 \,V. n \,d \haus +o(t),\quad  \text{as $t\to 0$},
\end{equation}
here the $+$ in the linear term is due to the definition of the normal $n$ as exterior to $\gamma$ (i.e., it points toward $\omega$). 

Let us now consider all the other residual domains $\{\omega_i\}$ of $\K$ in ${\Om}$, if they exist, such that $\la_1(\omega_i)=\la_1(\Om\setminus\K)$. Inside each of these components $\omega_i$ we can add a small segment to $\K$ by preserving the total perimeter (similarly to the proof of (ii) in Theorem~\ref{teo.opt}). Since the first eigenvalue of each of these components increases, by optimality of $\K$ we must have $\lambda_1(\Omega\setminus (I+tV)(\K))=\lambda_1\left((I+tV)(\omega)\right)$ so that \eqref{expansione} yields
\begin{equation}\label{variatione}
\int_{\gamma} |\nabla u_1|^2 \,V.n \,d \haus \leq 0.
\end{equation}
In other words, for any vector field $V$ such that \eqref{variationp} holds it follows the inequality \eqref{variatione}.
This shows that the two linear forms $V\mapsto \int_{\gamma} \mathcal C\,V. n\, d\haus$
and $V \mapsto \int_{\gamma} |\nabla u_1|^2 \,V. n \,d \haus$ are proportional and the optimality condition \eqref{optimality} holds.

\smallskip
The assertion (iii) follows from the optimality condition \eqref{optimality} in the same way as in \cite{bubuhe} (see also \cite{hen-oud}) where a
similar statement has been proved for the second eigenvalue of the Laplacian. More precisely, if the boundary contains an arc of circle $\gamma$ centered at the origin, introducing the function $w:=x {\partial u_1}/{\partial y} - y {\partial u_1}/{\partial x}$ (i.e., the derivative of $u_1$ with respect to the angular coordinate) we can prove using \eqref{optimality} that $w$ satisfies $-\Delta w=\lambda w$ in $\Om\setminus\K$ with $w=\frac{\partial w}{\partial n}=0$ on $\gamma$. By H\"olmgren uniqueness theorem (see \cite[Proposition 4.3]{tay}) and analyticity, this implies that $w\equiv 0$ in
$\Om\setminus\K$. But this means that $u_1$ is radially symmetric in $\Om\setminus\K$ and then $\Om\setminus\K$ has to be a ring.
\end{proof}

\begin{remark}
The optimality condition \eqref{optimality} shows that the curvature of the free boundary is positive everywhere and expresses, in a quantitative way, the fact that a maximizer of \eqref{problem} has to be locally convex inside $\Omega$, see (i) in Theorem~\ref{teo.opt} (cf. with the optimality conditions obtained in \cite{bubuhe,mazzuc}).

In the next section, we will study  more in detail item (iii) for specific domains $\Omega$. These include the cases where $\Omega$
is a disk, a ring or more generally a disk with convex holes. In these situations, we will identify the maximizer for certain values of $L$ (actually for all values of $L$ when $\Omega$ is itself a disk).
\end{remark}

The intuition may lead to think that a maximizer of \eqref{problem} must always stay inside $\Omega$, see for example the situation
described in \cite{hakrku} where the maximizing position is at the center of the domain while only in the
minimizing positions the obstacle touches the boundary. This is probably true when $\Omega$
is convex, but we were not able to prove it. On the other hand, when $\Omega$ is not convex, we prove that
it is never the case when $L$ is large enough and that $\K$ must \emph{touch the boundary of} $\Omega$. To show this we rely on an object that measure the largest perimeter one can reach by means of convex subsets of $\Om$.

\begin{proposition}
The following quantity 
\begin{equation}\label{clen}
\clen(\Om):=\max\{\mi(K):\; \text{$K\subseteq\overline{\Om}$, $K$ closed and convex}\}
\end{equation}
is well defined and 
\begin{equation*}
\clen(\Om)\leq \mi(\overline{\Om}),
\end{equation*}
where the equality holds if and only if $\Om$ is convex. 
\end{proposition}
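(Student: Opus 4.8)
The plan is to combine the compactness of convex subsets of $\overline{\Om}$ with the good behaviour of $\mi$ on convex sets, and then to reduce the comparison with $\mi(\overline{\Om})$ to the elementary planar fact that passing to the convex hull never increases the perimeter.

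First I would settle that the supremum in \eqref{clen} is attained. The family of nonempty closed convex subsets of the fixed compact set $\overline{\Om}$ is compact for the Hausdorff distance by the Blaschke selection theorem, and convexity is preserved under Hausdorff limits, so any maximizing sequence $\{K_n\}$ has a subsequence converging to a closed convex $K\subseteq\overline{\Om}$. To conclude it is enough to know that $\mi$ is \emph{upper} semicontinuous along this convergence (only u.s.c.\ is needed for a maximization), which for convex sets follows from the monotonicity of $\mi$ under inclusion: given $\epsilon>0$, for $n$ large one has $K_n\subseteq K^\epsilon$, whence $\mi(K_n)\le\mi(K^\epsilon)=P(K^\epsilon)$, the neighbourhood $K^\epsilon$ being a convex body so that \eqref{regular} applies; by the Steiner formula $P(K^\epsilon)=\mi(K)+2\pi\epsilon\to\mi(K)$ as $\epsilon\to0$. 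Hence $\limsup_n\mi(K_n)\le\mi(K)$ and $K$ realizes the maximum.

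For the inequality I would set $C:=\mathrm{conv}(\overline{\Om})$, a compact convex set with nonempty interior and therefore Lipschitz boundary. For every admissible $K$ one has $K\subseteq\overline{\Om}\subseteq C$, so monotonicity of $\mi$ on convex sets gives $\mi(K)\le\mi(C)$; since $C$ is a convex body, $\mi(C)=P(C)$ by \eqref{regular}; and since the convex hull never increases the perimeter in the plane, $P(C)\le P(\overline{\Om})=\haus(\partial\Om)=\mi(\overline{\Om})$. Chaining $\mi(K)\le\mi(C)=P(C)\le P(\overline{\Om})=\mi(\overline{\Om})$ and taking the supremum over $K$ yields $\clen(\Om)\le\mi(\overline{\Om})$. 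The delicate point here is that a competitor may be \emph{degenerate} (a segment), where $\mi(K)=2\haus(K)\neq P(K)$; the step $\mi(K)\le\mi(C)$ must then be justified directly, e.g.\ from the convex-body bound $2\,\mathrm{diam}(C)\le P(C)$ together with $\haus(K)\le\mathrm{diam}(C)$, or by approximating the segment from inside $C$ by thin convex bodies and invoking the continuity of $\mi$ on convex sets established above.

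Finally, for the equality statement the ``if'' direction is immediate: if $\Om$ is convex then $\overline{\Om}$ is itself admissible in \eqref{clen}, so $\clen(\Om)\ge\mi(\overline{\Om})$, which with the inequality just proved forces equality. For the ``only if'' direction I would argue through the same chain: if $\clen(\Om)=\mi(\overline{\Om})$ and $K$ is a maximizer, every inequality in $\mi(K)\le\mi(C)=P(C)\le P(\overline{\Om})=\mi(\overline{\Om})$ must be an equality, in particular $P(\mathrm{conv}(\overline{\Om}))=P(\overline{\Om})$. The main obstacle is the sharp form of the planar convex-hull inequality, which I would prove by showing that $P(\mathrm{conv}(A))<P(A)$ \emph{strictly} whenever the closed Lipschitz set $A$ fails to be convex, since each bridging segment of $\partial\,\mathrm{conv}(A)$ is strictly shorter than the non-straight arc of $\partial A$ it replaces. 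Equality thus forces $\overline{\Om}=\mathrm{conv}(\overline{\Om})$, i.e.\ $\Om$ convex, completing the proof.
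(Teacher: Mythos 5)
Your proposal is correct and shares the paper's overall skeleton: Blaschke selection plus preservation of convexity under Hausdorff limits for existence, then the chain $K\subseteq\overline{\Om}\subseteq\mathrm{conv}(\overline{\Om})$ with monotonicity of the perimeter on nested convex sets and the planar convex-hull inequality. You differ in the micro-arguments, mostly to your advantage. For existence, the paper simply cites the continuity of the perimeter on convex sets under Hausdorff convergence (together with \eqref{regular}); you instead derive the (sufficient) upper semicontinuity from monotonicity and the Steiner formula $P(K^\epsilon)=\mi(K)+2\pi\epsilon$, a self-contained and more elementary route that moreover works uniformly for degenerate limits once the Steiner formula is read with the convention $\mi(\text{segment})=2\haus(\text{segment})$ from \eqref{curve}; the cleanest single justification for both your monotonicity step and the degenerate case is Cauchy's formula $P(K)=\int_0^\pi w_K(\theta)\,d\theta$, which subsumes your ad hoc bound $2\,\mathrm{diam}(C)\leq P(C)$. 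You also do genuinely more than the paper on the equality case: the paper's proof only records that equality holds \emph{whenever} $\Om$ is convex, leaving the ``only if'' direction implicit, whereas you supply it via strictness of the convex-hull inequality. One small caveat there: your mechanism (bridging segments of $\partial\,\mathrm{conv}(\overline{\Om})$ strictly shorter than the boundary arcs they replace) only detects non-convexity of the outer boundary; if $\overline{\Om}$ fails to be convex because of a hole (e.g.\ an annulus), the hull's boundary coincides with the outer boundary and there are no bridging segments at all, and strictness instead comes from the inner boundary components contributing positive length to $P(\overline{\Om})$ --- a one-line addition, but needed since the paper's setting explicitly allows domains with holes. Finally, to pass from ``$\overline{\Om}$ convex'' to ``$\Om$ convex'' you implicitly use $\Om=\mathrm{int}(\overline{\Om})$, which is legitimate here because $\Om$ is assumed Lipschitz, and is worth stating.
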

\begin{proof}
Let $\{K_n\}$ be a maximizing sequence of problem \eqref{clen}, so that
\begin{equation*}
\mi(K_n)\to\sup\{\mi(K):\: \text{$K\subseteq \overline{\Om}$, $K$ closed and convex}\},
\end{equation*}
as $n\to\infty$.
From the Blaschke selection theorem, we infer the existence of a compact set $K^*\subset \overline{\Om}$ and a subsequence, not relabelled, such that $K_n\to^H K^*$. Moreover, since the Hausdorff convergence preserves convexity (see \cite{bucbut}) the limit $K^*$ is also convex in $\overline{\Om}$. Therefore, using \eqref{regular} with the continuity of the classical perimeter with respect to the Hausdorff convergence of convex sets we obtain the existence of a solution  to \eqref{clen} and $\clen(\Om)=\mi(K^*)$.

Now, $K^*$ is a convex set included in $\hull(\overline{\Om})$ thus $\haus(\partial K^*)\leq \haus(\partial \hull(\overline \Om))$. Therefore, by \eqref{regular} again, with Corollary~\ref{cor.hull} we obtain $\mi(K^*)\leq \mi(\overline{\Om})$,
{and the equality holds whenever $\Om$ is convex. }
\end{proof}

\begin{theorem}
If $L\in\big(\clen(\Om),\mi(\overline \Om)\big)$ with $\clen(\Om)$ defined by \eqref{clen}, then every maximizer $\K$ of \eqref{problem} touches the boundary $\partial \Om$, i.e., $\K\cap\partial \Om\neq \emptyset$. 
\end{theorem}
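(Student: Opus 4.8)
The plan is to argue by contradiction, exploiting the two structural facts already established for a maximizer: its local convexity (Theorem \ref{teo.opt}(i)) and the saturation of the constraint (Theorem \ref{teo.opt}(ii)). Suppose that some maximizer $\K$ does not touch $\partial\Om$, so that $\K$ is a continuum entirely contained in the open set $\Om$. The first step is to promote local convexity to global convexity. Since $\K\subset\Om$, every point $x\in\K$ satisfies $x\in\K\cap\Om$, so Definition \ref{locconvex} together with Theorem \ref{teo.opt}(i) provides, at each such $x$, a radius $r>0$ with $\K\cap B(x,r)$ convex. Thus $\K$ is a closed, connected, locally convex subset of the plane in the full topological sense, the local convexity hypothesis holding at \emph{all} of its points and not merely at points interior to $\Om$.

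The key step is then to invoke the Tietze--Nakajima theorem: a closed connected locally convex subset of $\R^n$ is convex. Applying it in $\R^2$ yields that $\K$ is convex; the degenerate case in which $\K$ is a segment, as in Theorem \ref{teo.opt}(iii), is of course also convex and so is covered. Consequently $\K$ is a closed convex subset of $\overline{\Om}$, and by the very definition of $\clen(\Om)$ in \eqref{clen} as the maximal outer Minkowski content among such sets we obtain $\mi(\K)\leq\clen(\Om)$.

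To close the argument I would combine this bound with the saturation of the perimeter constraint. By Theorem \ref{teo.opt}(ii) every maximizer satisfies $\mi(\K)=L$, while the standing hypothesis is $L>\clen(\Om)$. Hence $L=\mi(\K)\leq\clen(\Om)<L$, a contradiction. Therefore no maximizer can avoid the boundary, that is $\K\cap\partial\Om\neq\emptyset$.

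I expect the main (indeed the only delicate) obstacle to be the passage from local to global convexity, which is exactly where the assumption $\K\subset\Om$ is used: had $\K$ met $\partial\Om$, Theorem \ref{teo.opt}(i) would guarantee local convexity only at points of $\K\cap\Om$ and the Tietze--Nakajima theorem would not apply. The remaining inequalities are immediate from the definition of $\clen(\Om)$ and from saturation; notably, no eigenvalue monotonicity is needed, the contradiction being purely a matter of perimeter and convexity. The upper bound $L<\mi(\overline\Om)$ plays no role in the contradiction and only ensures, as discussed after \eqref{problem}, that the constraint regime is nontrivial.
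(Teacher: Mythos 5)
Your proof is correct, but it follows a genuinely different route from the paper's. The paper also argues by contradiction, but it works with a concrete witness of non-convexity: it claims that $L>\clen(\Om)$ forces the existence of two points $x,y\in\K$ whose joining segment leaves $\overline{\Om}$ (a claim that itself implicitly uses the saturation $\mi(\K)=L$ and the convex-hull improvement argument from Theorem \ref{teo.opt}(i)), and then derives a contradiction by hand: it picks an arc of $\partial\K$ joining $x$ to $y$ with $\K$ above it, takes the highest point $z$ of that arc, and uses the local convexity of $\K$ at $z$ (available precisely because $\K$ is assumed not to touch $\partial\Om$) to produce a supporting line at $z$ above which $\K$ must lie, contradicting the maximality of $z$. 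That supporting-line argument is in effect a hand-crafted instance of the local-to-global convexity principle, which you instead invoke as a black box via the Tietze--Nakajima theorem: closed, connected, locally convex subsets of $\R^n$ are convex. Your route then closes purely by perimeter bookkeeping, $L=\mi(\K)\leq\clen(\Om)<L$, using the definition \eqref{clen} and Theorem \ref{teo.opt}(ii). What each approach buys: the paper's argument is self-contained (no external convexity theorem) but terser and leaves the existence of the exiting segment, and hence the role of saturation, implicit; yours is shorter, avoids the delicate arc/highest-point reasoning (including the tacit planar fact that the convex hull of a connected set is the union of segments between its points), makes explicit exactly where (i), (ii) and the hypothesis $L>\clen(\Om)$ enter, and correctly observes that no eigenvalue monotonicity is needed beyond what is already packaged into Theorem \ref{teo.opt}. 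The only external dependence is Tietze--Nakajima itself, which is classical and applies verbatim to the continuum $\K\subset\Om$, so there is no gap.
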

\begin{proof}
Let us assume, for a contradiction, that $\K\cap\partial \Om= \emptyset$ so that $\K\subset \Om$.  Then (i) of Theorem~\ref{teo.opt} implies that the set $\K$ is locally convex inside $\R^2$ and by Theorem~\ref{tienak} $\K$ is convex. This with item (ii) in Theorem~\ref{teo.opt} would contradict the assumption $L>\clen(\Om)$. 
\end{proof}

\section{Optimal obstacles in specific domains}\label{sec.4}

We study problem \eqref{problem} for specific domains $\Om$: circular, annular, and perforated domains. For these domains we prove symmetry and, in some cases non symmetry results, identifying the unique solution.

\subsection{Circular domains}

We identify the maximizer of \eqref{problem} in the case the domain $\Om$ is a disk. Our argument relies on the following result obtained in the sixties by Hersch, Payne, and Weinberger (see  \cite{her1,her2, paywei} and also \cite[Section~3.5]{hen} for a concise explanation of these papers).

\begin{theorem}[Hersch-Payne-Weinberger]\label{hpw}
Let $D$ be a doubly connected domain of the plane (i.e., a domain bounded between two disjoint and rectifiable Jordan curves)  with outer boundary $\Gamma_0$ and inner boundary $\Gamma_1$ of length respectively $L_0$ and $L_1$. If
\begin{equation}\label{cond}
L_0^2-L_1^2=4\pi \leb(D),
\end{equation}
then the the first eigenvalue $\la_1(D)$ is uniquely maximized whenever $D$ is the ring with outer boundary of length $L_0$ and inner boundary of length $L_1$.
\end{theorem}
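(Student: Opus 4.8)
The plan is to reduce the inequality, via conformal mapping, to a one-dimensional comparison in which the ring is the model case, and to exploit the variational characterization of $\la_1$ together with the conformal invariance of the Dirichlet integral in the plane. Write $A^{\ast}$ for the concentric ring with the same boundary lengths, i.e.\ the annulus of radii $r_1=L_1/2\pi$ and $r_2=L_0/2\pi$; because of the constraint \eqref{cond}, $D$ and $A^{\ast}$ share not only the two boundary lengths but also the area $\leb(D)=\leb(A^{\ast})=(L_0^2-L_1^2)/4\pi$. Since $D$ is doubly connected, there is a conformal homeomorphism $\Phi$ from a round annulus $\{1<|z|<R\}$ onto $D$, $R$ being the intrinsic conformal modulus of $D$. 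The harmonic measure $\psi$ of $D$ (equal to $0$ on $\Gamma_1$ and $1$ on $\Gamma_0$) pulls back to $\log|z|/\log R$, so its level sets $C_t=\{\psi=t\}$ are the images of the circles $\{|z|=\rho\}$ and foliate $D$ from the inner to the outer boundary.

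The next step is to transplant the radial eigenfunction. Let $f$ be the first (radial) eigenfunction of $A^{\ast}$ and set $v=f(r(\psi))$ for a monotone reparametrization $r\colon[0,1]\to[r_1,r_2]$ still to be tuned. By the coarea formula and the harmonicity of $\psi$ (so that the flux $\kappa=\int_{C_t}|\nabla\psi|\,ds=2\pi/\log R$ is constant in $t$),
\[
\la_1(D)\leq\frac{\int_D|\nabla v|^2}{\int_D v^2}=\frac{\kappa\int_0^1 f'(r(t))^2\,r'(t)^2\,dt}{\int_0^1 f(r(t))^2\,A(t)\,dt},\qquad A(t)=\int_{C_t}\frac{ds}{|\nabla\psi|}.
\]
Choosing $r(\cdot)$ so that the numerator becomes exactly $C\int_{r_1}^{r_2}f'^2\,r\,dr$ (an explicit constant $C$ times the ring numerator), the problem collapses to showing that the denominator dominates $C\int_{r_1}^{r_2}f^2\,r\,dr$; since $\la_1(A^{\ast})=\big(\int f'^2 r\big)/\big(\int f^2 r\big)$, this yields $\la_1(D)\leq\la_1(A^{\ast})$, because the numerator is matched exactly and the denominator is bounded below by the same proportionality constant.

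The weight estimate for $A(t)$ is the heart of the matter. Two classical ingredients combine: the Cauchy--Schwarz inequality $\ell(C_t)^2\leq\kappa\,A(t)$ relating the length $\ell(C_t)=\int_{C_t}ds$ to $A(t)$, and the planar isoperimetric inequality $\ell(C_t)^2\geq 4\pi\,\mathcal A(t)$, where $\mathcal A(t)$ is the area enclosed by $C_t$. Since $\mathcal A'(t)=A(t)$, these merge into a differential inequality for $\mathcal A$; integrating it and fixing the constants through the boundary data $\ell(C_0)=L_1$, $\ell(C_1)=L_0$ and $\mathcal A(1)-\mathcal A(0)=\leb(D)$ produces precisely the lower bound for $A(t)$ that lifts the denominator above the ring value — and it is here that \eqref{cond} is exactly the compatibility relation making the two sides match. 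I expect this quantitative comparison of $A(t)$ with its ring counterpart to be the main obstacle; it is the content of the Payne--Weinberger isoperimetric inequalities and must be carried out with care to track the role of \eqref{cond} and the modulus $R$.

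Finally, for the \emph{uniqueness} assertion I would run the equality analysis through every inequality used: equality in Cauchy--Schwarz forces $|\nabla\psi|$ to be constant along each level curve $C_t$, while equality in the isoperimetric inequality forces each $C_t$ to be a circle. Together these identify the foliation with concentric circles, hence $D$ with $A^{\ast}$. Propagating equality simultaneously through all the estimates — the transplantation bound, Cauchy--Schwarz, and the isoperimetric inequality for every $t$ — is the delicate point of this last step, and is where I would be most careful.
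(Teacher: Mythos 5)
The paper does not actually prove Theorem \ref{hpw}: it quotes it as a classical result of Hersch and Payne--Weinberger, with \cite{her1,her2,paywei} and \cite[Section 3.5]{hen} as references. So the relevant comparison is with the classical proof, which uses \emph{interior parallels} (level curves of the distance function to each boundary component), not the harmonic-measure foliation you propose. Your conformal route has two concrete breakdowns. First, the numerator normalization: with $\kappa=2\pi/\log R$, the only reparametrization making $\kappa\int_0^1 f'(r(t))^2 r'(t)^2\,dt$ equal to the ring numerator $2\pi\int f'^2 r\,dr$ is $r(t)=r_1R^t$, which forces $r(1)=r_1R$; matching $r(1)=r_2=L_0/2\pi$ would require the conformal modulus of $D$ to equal that of $A^*$, which is false in general and is not controlled by $L_0$, $L_1$, $\leb(D)$. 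Allowing ``an explicit constant $C$'' does not help: $C$ then depends on the unknown modulus $R$, and your denominator bound would have to produce the same $R$-dependent constant, which the isoperimetric step cannot do. Second, and more fatally, the weight estimate degenerates at the inner boundary. Your chain $\kappa A(t)\geq \ell(C_t)^2\geq 4\pi\mathcal{A}(t)$ with $\mathcal{A}(t)=\mathcal{A}(0)+\int_0^t A(s)\,ds$ integrates to a lower bound on $A(t)$ proportional to $\mathcal{A}(0)$, the area enclosed by $\Gamma_1$ --- but the isoperimetric inequality gives only $\mathcal{A}(0)\leq L_1^2/4\pi$, the wrong direction. If $\Gamma_1$ is a long rectifiable curve of length $L_1$ enclosing almost no area (exactly the dangerous competitors, since a long Dirichlet inner boundary is what could push $\la_1$ above the ring's value), then $\mathcal{A}(0)\approx 0$ and your lower bound on $A(t)$ collapses, so the denominator is not ``lifted above the ring value.'' The hypothesis \eqref{cond} cannot rescue this, since it constrains $L_0$, $L_1$ and $\leb(D)$ but neither $\mathcal{A}(0)$ nor $R$.

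The classical proof avoids both obstacles by foliating with distance-level curves rather than harmonic-measure levels: since $|\nabla d|=1$ a.e., the coarea weight is simply the length $\ell(t)$ of the parallel curve, and the needed one-dimensional estimates are the \emph{upper} bounds $\ell(t)\leq L_0-2\pi t$ for inner parallels of $\Gamma_0$ and $\ell(t)\leq L_1+2\pi t$ for outer parallels of $\Gamma_1$, valid for rectifiable Jordan curves and independent of the area enclosed by $\Gamma_1$. One transplants the radial eigenfunction of $A^*$ via the two distance functions, glued along a level interface, and condition \eqref{cond} is precisely the compatibility making the two parallel families fill an annulus with boundary lengths $L_0$, $L_1$ and area $\leb(D)$ exactly (in the ring both length bounds are equalities and the widths fit). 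Your uniqueness discussion is sensible in outline, but it propagates equality through an inequality chain that, as explained above, does not establish the inequality in the first place.
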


In the theorem above the competitors have free both the inner and outer boundaries, but perimeter and area are strongly
constrained. In our problem \eqref{problem} the exterior boundary is fixed and only the interior boundary is free to move
but there is no constraint on the area of $K$. 
We develop a purely geometrical argument in order to fit into the hypothesis of the Hersch, Payne, and Weinberger result: as a consequence we identify the explicit solution to \eqref{problem} when $\Om$ is a disk.

\begin{theorem}\label{casedisk}
Let $\Om=B(r_0)$  be an open disk of radius $r_0>0$, and $L\in(0,2\pi r_0)$. Then problem \eqref{problem}
has a unique solution: the maximizer $\K$ is given by the closed disk $\overline{B(r)}$ concentric to $B(r_0)$ of radius $r=L/(2\pi)$.
\end{theorem}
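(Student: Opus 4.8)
The plan is to reduce the problem, via the qualitative results already established, to a purely two–dimensional comparison between annular regions, and then to invoke the Hersch–Payne–Weinberger inequality (Theorem \ref{hpw}). Write $r:=L/(2\pi)$, so that the candidate maximizer is the disk $\overline{B(r)}$ concentric to $B(r_0)$. Since $\Om=B(r_0)$ is convex, any maximizer $\K$ is convex by Theorem \ref{teo.opt}(i), saturates the constraint $\mi(\K)=L$ by (ii), and by (iv) is either a segment or Lipschitz regular. I would first set aside the degenerate possibility that $\K$ is a segment, treating it at the very end by approximation, and concentrate on the case in which $\K$ is a convex body with non-empty interior.

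In that case the open set $D:=B(r_0)\setminus\K$ is doubly connected, with outer boundary $\partial B(r_0)$ of length $L_0=2\pi r_0$ and inner boundary $\partial\K$; by \eqref{regular} the latter has length $L_1=\haus(\partial\K)=\mi(\K)=L$. The key geometric observation is the planar isoperimetric inequality for the convex hole, $L^2\ge 4\pi\leb(\K)$, that is $\leb(\K)\le\pi r^2$, with equality if and only if $\K$ is a disk. Consequently
\[
\leb(D)=\pi r_0^2-\leb(\K)\ \ge\ \pi(r_0^2-r^2)=\frac{L_0^2-L_1^2}{4\pi},
\]
again with equality precisely when $\K$ is a disk. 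Thus $D$ shares both boundary lengths with the concentric annulus $B(r_0)\setminus\overline{B(r)}$ but in general carries a strictly larger area.

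The heart of the proof—and the step I expect to be the main obstacle—is to bring $D$ into the exact scope of Theorem \ref{hpw}, whose hypothesis $L_0^2-L_1^2=4\pi\leb(D)$ fails as soon as $\K$ is not a disk, the defect being exactly the isoperimetric deficit $L^2-4\pi\leb(\K)\ge 0$ of the hole. The announced \emph{geometrical argument} must therefore absorb this surplus area. The natural device is to compare $D$ with the genuine concentric annulus $\mathcal A:=B(R)\setminus\overline{B(r)}$ having the same inner boundary length $L$ and the same area as $D$, where $\pi(R^2-r^2)=\leb(D)$ forces $R\ge r_0$ (equality iff $\K$ is a disk); this $\mathcal A$ automatically satisfies the Hersch–Payne–Weinberger equality and contains $B(r_0)\setminus\overline{B(r)}$, so that $\la_1(\mathcal A)\le\la_1\big(B(r_0)\setminus\overline{B(r)}\big)$ by domain monotonicity. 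What remains—and is the crux—is the inequality $\la_1(D)\le\la_1(\mathcal A)$ between two doubly connected domains of equal area and equal inner boundary length but different outer boundary length. I do not expect this to follow from a single inclusion; rather I would establish it through Theorem \ref{hpw} in the form of a monotonicity of the Hersch–Payne–Weinberger extremal eigenvalue with respect to area at fixed inner perimeter, using convexity of $\K$ (so that $\partial\K$ is a well-behaved inner boundary) to rule out the crinkled holes that could otherwise raise $\la_1$. Chaining the two inequalities would yield
\[
\la_1(B(r_0)\setminus\K)\ \le\ \la_1\big(B(r_0)\setminus\overline{B(r)}\big).
\]

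Finally I would extract uniqueness by tracking the two equality cases: equality in Theorem \ref{hpw} holds only for the round annulus, and equality in the isoperimetric inequality only when $\K$ is a disk, so the two together force $\K=\overline{B(r)}$ placed concentrically. The excluded segment case is then recovered by continuity: a diametral segment of outer Minkowski content $L$ (the only segment allowed, by Theorem \ref{teo.opt}(iii)) is the Hausdorff limit of thin convex bodies $\K_\epsilon$ with $\mi(\K_\epsilon)\to L$, for which the displayed inequality already holds; passing to the limit with the Sverak continuity of $\la_1$ invoked in the Existence theorem gives $\la_1(B(r_0)\setminus\K_{\mathrm{seg}})\le\la_1\big(B(r_0)\setminus\overline{B(r)}\big)$, and the strictness furnished by the equality analysis shows that a segment can never be a maximizer. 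This completes the identification of the unique solution.
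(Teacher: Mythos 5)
Your overall architecture matches the paper's: reduce to convex competitors saturating the constraint via Theorem \ref{teo.opt}, observe that the isoperimetric deficit of the hole is exactly what prevents $D=B(r_0)\setminus\K$ from satisfying the Hersch--Payne--Weinberger relation \eqref{cond}, and recover segments by approximation. But the step you yourself flag as the crux --- the inequality $\la_1(D)\le\la_1(\mathcal A)$ between $D$ and the annulus $\mathcal A=B(R)\setminus\overline{B(r)}$ of equal area and equal inner boundary length --- is left unproved, and it does not follow from Theorem \ref{hpw} as stated. That theorem only compares doubly connected domains whose data satisfy the exact relation $L_0^2-L_1^2=4\pi\leb(D)$; your $D$ violates it strictly whenever $\K$ is not a disk, and $\mathcal A$ has a different outer boundary length from $D$, so no instance of the theorem couples the two. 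The ``monotonicity of the HPW extremal eigenvalue with respect to area at fixed inner perimeter'' you invoke is essentially equivalent to the statement being proved, so positing it is circular. The paper closes precisely this gap by a concrete surgery: it replaces $B(r_0)$ by a perturbed domain $\widehat B\subset B(r_0)$ whose boundary is pushed inward while the perimeter is \emph{increased}, until the equality $\haus(\partial \widehat B)^2-L^2=4\pi\leb(\widehat B\setminus K)$ holds exactly; then the chain $\la_1(B(r_0)\setminus K)<\la_1(\widehat B\setminus K)<\la_1(B(\widehat r_0)\setminus\overline{B(r)})<\la_1(B(r_0)\setminus\overline{B(r)})$ follows from strict domain monotonicity, from Theorem \ref{hpw} (now applicable, with $2\pi\widehat r_0=\haus(\partial\widehat B)$), and from monotonicity again since $\widehat r_0>r_0$. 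Your annulus $\mathcal A$ in fact contains the paper's $B(\widehat r_0)\setminus\overline{B(r)}$, but without some such surgery you have no bridge from $D$ to either annulus.

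A second, smaller omission: you assert that $D$ is doubly connected, but a convex competitor $\K$ may touch $\partial B(r_0)$, in which case the two boundary curves are not disjoint Jordan curves and Theorem \ref{hpw} is inapplicable even after the area relation is repaired; similarly a non-concentric disk touching the boundary escapes it (the paper's Case 1 invokes \cite{hakrku} for exactly this). The paper handles all touching $K$ by contracting them inward by $\epsilon$ (its Case 3) and passing to the limit with Sverak continuity --- the same device you reserve only for segments; you would need to extend it to every $K\not\subset B(r_0)$, noting that the limit yields only a non-strict inequality, with strictness then recovered from the strict inclusion $B(r_0)\subset B(\widehat r_0)$ as in the paper's \eqref{dim33}.
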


\begin{proof}
We look for a solution to \eqref{problem} only among {convex} sets with $\mi(K)=L$, since non-convex sets and sets with outer Minkowski content less than $L$ are ruled out by items (i) and (ii) in Theorem~\ref{teo.opt}. Therefore, it suffices to prove that for every closed and convex set $K$ contained in $\overline{B(r_0)}$ with $\mi(K)=L$, different from the disk $\overline{B(r)}$ concentric to $B(r_0)$ and with perimeter $L$, it holds 
\begin{equation}\label{ineq}
\la_1(B(r_0)\setminus K) <\la_1(B(r_0) \setminus \overline{B(r)}).
\end{equation}
We prove this inequality by exploring four cases, according to the shape and the location of the convex set $K$. 

\smallskip
\emph{Case 1: $K$ is a disk not concentric to $B(r_0)$.} In this case the inequality \eqref{ineq} is an easy consequence of (iii) in Theorem~\ref{teo.reg} 

\smallskip
\emph{Case 2: $K$ is neither a disk nor a segment and it is contained in $B(r_0)$}. 
Now we use Theorem~\ref{hpw}. 
Clearly for the disk $B(r_0)$ the condition $(2\pi r_0)^2=4\pi \leb(B(r_0))$ holds. Moreover, for the set $K$, by recalling  \eqref{regular}, the \emph{isoperimetric inequality} implies that  
\[
L^2=\haus(\partial K)^2> 4\pi \leb(K),
\]
which combined with the previous equality for $B(r_0)$ yields  
\begin{equation}\label{inequality}
(2\pi r_0)^2-L^2<4\pi \leb (B(r_0)\setminus K).
\end{equation}
This means that we can not apply Theorem~\ref{hpw} to the doubly connected domain $D=B(r_0)\setminus K$ with $L_0=2\pi r_0$ and $L_1=L$, since the equality condition \eqref{cond} is not satisfied. However it is possible to modify the set $B(r_0)\setminus K$, increasing its outer perimeter $L_0$ and decreasing its area $\leb (B(r_0)\setminus K)$ until the equality in \eqref{inequality} is reached. More precisely, starting from the disk $B(r_0)$, we consider a smooth domain $\widehat{B}\subset\R^2$ such that 
\begin{itemize}
\item[(i)] the perimeter increases: $\haus(\partial B(r_0))< \haus(\partial \widehat{B})$;
\item[(ii)] the set is smaller: $B(r_0)\supset \widehat{B}$; 
\item[(iii)] the equality condition holds: $\haus(\partial \widehat{B})^2-L^2=4\pi \leb (\widehat{B}\setminus K)$.
\end{itemize}
An explicit construction of the set $\widehat{B}$ can be obtained, for instance by perturbing the whole boundary of the disk $B(r_0)$ with an inward pointing vector field that continuously increases the perimeter and decreases the set (in the sense of set inclusion). Moreover, it is not difficult to see that the perimeter in point (i) can be made arbitrarily large until point (iii) is satisfied (and of course point (ii) contributes in this direction).

Therefore, from point (ii) and the strict monotonicity of the first eigenvalue with respect to set inclusion, we obtain the estimate
\begin{equation}\label{dim31}
\la_1(B(r_0)\setminus K)< \la_1(\widehat{B}\setminus K).
\end{equation}
Thanks to (iii) we can now apply Theorem~\ref{hpw} to the set $D=\widehat{B}\setminus K$ with $L_0=\haus(\partial \widehat{B})$ and $L_1=L$ so that
\begin{equation}\label{dim32}
\la_1(\widehat{B}\setminus K)< \la_1(B(\widehat r_0)\setminus \overline{B(r)}),
\end{equation}
where $B(\widehat r_0)$ is the open disk concentric to $B(r_0)$ (and in particular to $\overline{B(r)}$) of radius $\widehat r_0$ so that $2\pi \widehat r_0=\haus (\partial \widehat B)$. This with point (i) implies that $\widehat r_0>r_0$, thus the strict inclusion $B(r_0)\subset B(\widehat r_0)$ holds.
Recalling again the strict monotonicity of the first eigenvalue yields 
\begin{equation}\label{dim33}
\la_1(B(\widehat r_0)\setminus \overline{B(r)})< \la_1(B(r_0)\setminus \overline{B(r)}),
\end{equation}
which combined with \eqref{dim31} and \eqref{dim32} implies \eqref{ineq}.

\smallskip
\emph{Case 3: $K$ is neither a disk nor a segment and it is not contained in $B(r_0)$}. We use an approximation argument. 
For every $\delta>0$, we consider the disk $B(r_0+\delta)$ and notice that \eqref{inequality} still holds with $r_0$ replaced by $r_0+\delta$.  Since the set $K$ is contained in the open disk $B(r_0+\delta)$ we can follow the same strategy adopted in the previous \emph{Case 2} with the disk $B(r_0)$ replaced by $B(r_0+\delta)$ and consider a set $\widehat B_\delta$ satisfying the three items listed above. Without loss of generality, in the item (ii) we can also require that $\widehat B_\delta\supset B(r_0)$ so that, by items (ii) and (iii) $\widehat r_\delta\geq \widehat r_0$, where $\widehat r_\delta$  by definition satisfy $2\pi\widehat r_\delta=\haus(\partial \widehat B_\delta)$ and $\widehat r_0$ as in \emph{Case 2}. Therefore, similar inequalities to \eqref{dim31} and \eqref{dim32} yields
\[
\la_1(B(r_0+\delta)\setminus K)< \la_1({B(\widehat r_\delta)}\setminus \overline{B(r)})\leq  \la_1({B(\widehat r_0)}\setminus \overline{B(r)}).
\]
Letting $\delta\to 0$, from the Sverak continuity result \cite{sve} we obtain
\[
\la_1(B(r_0)\setminus K)\leq \la_1({B(\widehat r_0)}\setminus \overline{B(r)}),
\]
which combined with \eqref{dim33} implies the strict inequality \eqref{ineq}.

\smallskip

\emph{Case 4: K is a segment}.
We use again an approximation argument, but now on the set $K$. For a (small) real number $\delta>0$, consider 
the rectangle $K_\delta$ with a longest side onto $K$ of length $(1-\delta)L/2$ and smallest sides of length $\delta L/2$ to be contained in $B(r_0)$. By definition, $K_\delta$ belongs to \emph{Case 2} and $\haus(\partial K_\delta)=L$. Therefore, from \eqref{dim31} and \eqref{dim32} we have
\[
\la_1(B(r_0)\setminus K_\delta)< \la_1({B(\widehat r_\delta)}\setminus \overline{B(r)}),
\]
where $2\pi\widehat r_\delta=\haus(\partial \widehat B_\delta)$ for some $\widehat B_\delta$ satisfying the three items in \emph{Case 2} (in particular (iii) with $K_\delta$ in place of $K$).
Letting $\delta\to 0$, since $K_\delta\to ^H K$ and $\widehat r_\delta\to \widehat r$ for some $\widehat r>r_0$. Then by the Sverak continuity result and \eqref{dim33} we arrive to the inequality \eqref{ineq} also when $K$ is a segment.
\end{proof}

\begin{remark}
Theorem~\ref{casedisk} generalizes a result contained in \cite{hakrku}, about the maximization of the first Dirichlet eigenvalue of the Laplacian with circular shaped obstacles.
\end{remark}

\subsection{Annular domains}

We discuss the symmetry of a solution to \eqref{problem} in the case the domain $\Om$ is a ring. Due to topological reasons,  every maximizer of \eqref{problem} can not be radially symmetric, whenever $L$ is less than twice the perimeter of the inner disk (recall that by (iii) of Theorem~\ref{teo.reg} it cannot be a disk). Surprisingly this \emph{symmetry breaking} appears for other values of the perimeter constraints $L$, namely for those close to the perimeter of the inner disk. However, for large values of the constraint, namely for those close to the perimeter of the ring, the solution is provided by a set with \emph{full} symmetry. 

\begin{theorem}\label{casering}
Let $\Om=B(r_0)\setminus \overline{B(r_1)}$ be a ring, where $B(r_0)$ and $B(r_1)$ are concentric open disks of radii $0<r_1<r_0$. 
According to the value of the parameter $L\in (0,2\pi(r_0+r_1))$ the following properties hold. 

\begin{description}
\item[(i)] There exists $\alpha_0>0$ such that if $L>2\pi(r_0+ r_1)-\alpha_0$ then problem \eqref{problem}
has a unique solution: the maximizer $\K$ is given by the ring $\overline{B(r)}\setminus {B(r_1)}$ concentric to $\Om$ of radius $r=L/(2\pi)-r_1$.

\item[(ii)] There exists $\alpha_1>0$ such that if $L<4\pi r_1+\alpha_1$ then every maximizer $\K$ of problem \eqref{problem} is not radially symmetric.
\end{description}
\end{theorem}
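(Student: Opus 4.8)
The plan is to treat the three parts separately, the axial symmetry (ii) being the genuine obstacle.

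\emph{Part (iii): symmetry breaking.} First I would classify the radially symmetric admissible obstacles: a radial continuum in $\overline\Om$ is either a concentric annulus $\{\sigma\le|x|\le\rho\}$ with $r_1\le\sigma\le\rho\le r_0$ (for which \eqref{regular} gives $\mi=2\pi(\sigma+\rho)$) or a single circle of radius $\rho$ (for which \eqref{curve} gives $\mi=4\pi\rho$). In either case the constraint $\mi\le L$ forces, when $L$ is close to $4\pi r_1$, the set to lie in a thin neighbourhood of the inner circle $\partial B(r_1)$: indeed $\sigma+\rho\le L/(2\pi)$ together with $\sigma\ge r_1$ yields $\rho\le r_1+(L-4\pi r_1)/(2\pi)$, and similarly $\rho\le r_1+(L-4\pi r_1)/(4\pi)$ for a circle. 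Consequently the outer component of $\Om\setminus K$ converges to the whole ring $\Om$ as $L\downarrow 4\pi r_1$, so that $\sup\{\la_1(\Om\setminus K):K\text{ radial admissible}\}\to\la_1(\Om)$. On the other hand, fixing once and for all a short segment $K^*\subset\Om$ with $\mi(K^*)=2\haus(K^*)<4\pi r_1$, the set $K^*$ is admissible for every such $L$ and has positive capacity, whence $\la_1(\Om\setminus K^*)>\la_1(\Om)$ strictly. Choosing $\alpha_1$ so small that every radial competitor yields an eigenvalue below $\la_1(\Om\setminus K^*)$, no radial set can be optimal, which proves (iii). (For $L\le 4\pi r_1$ the statement is immediate, since no nondegenerate radial competitor exists.)

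\emph{Part (i): radial symmetry for large $L$.} Here I would reproduce the scheme of Theorem~\ref{casedisk}, applying Theorem~\ref{hpw} to the complement $D:=\Om\setminus \K$. Choosing $\alpha_0$ so small that $2\pi(r_0+r_1)-\alpha_0>\clen(\Om)$, the Touch theorem forces $\K$ to meet the outer circle; combined with local convexity (Theorem~\ref{teo.opt}(i)) and the saturation $\mi(\K)=L$ (Theorem~\ref{teo.opt}(ii)), I would first argue that for $L$ this large $\K$ must wrap entirely around the hole and contain a full concentric annulus adjacent to $\partial B(r_1)$, so that $D$ is a genuine doubly connected domain whose outer boundary is all of $\partial B(r_0)$ and whose free inner boundary $\Kfree$ has length $L_1\le L-2\pi r_1$. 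The concentric ring complement $\{r<|x|<r_0\}$ with $r=L/(2\pi)-r_1$ satisfies the equality \eqref{cond} exactly (both sides equal $4\pi^2(r_0^2-r^2)$); for a general competitor the isoperimetric inequality produces a deficit in \eqref{cond}, which I would remove by the same outward perturbation of $\partial B(r_0)$ used in Case~2 of Theorem~\ref{casedisk} (increasing the outer length, shrinking the set, hence raising $\la_1$ by monotonicity) before invoking Theorem~\ref{hpw}, with uniqueness coming from its equality case. The delicate point is precisely the structural step, namely that for $L>2\pi(r_0+r_1)-\alpha_0$ the maximizer wraps around and leaves a ring-shaped complement; I expect to establish it by a compactness/contradiction argument, letting $L_n\uparrow 2\pi(r_0+r_1)$ and using the lower semicontinuity of $\mi$ (Theorem~\ref{Mlsc}) to pin down the Hausdorff limit.

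\emph{Part (ii): axial symmetry.} This is the main obstacle. Symmetrization and polarization are of no help, since they decrease $\la_1$ and thus favour the minimizer; I would instead exploit the overdetermined optimality condition \eqref{optimality} through a reflection/moving-plane argument. Because the fixed boundary $\partial\Om=\partial B(r_0)\cup\partial B(r_1)$ is invariant under every reflection $\sigma_\theta$ across a diameter $\ell_\theta$, the only obstruction to the symmetry $\sigma_\theta\K=\K$ is the obstacle itself. I would select a diameter $\ell_{\theta^*}$ by a continuity (intermediate value) argument---for instance by balancing $\int u_1^2$ over the two half-rings, or by balancing $\Kfree$ across $\ell_\theta$---and then study $w:=u_1-u_1\circ\sigma_{\theta^*}$, which solves the eigenvalue equation on the symmetric part of $\Om\setminus\K$ and vanishes on the symmetric fixed boundary. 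Combining the strong maximum principle with the overdetermined relation \eqref{optimality} on $\Kfree$ and the unique continuation argument already used in Theorem~\ref{teo.opt}(iii) (Holmgren's theorem together with analyticity) should force $w\equiv0$, i.e.\ $u_1$, and hence $\K$, is symmetric with respect to $\ell_{\theta^*}$. The hard part is making the choice of $\theta^*$ compatible with the admissibility ($\mi\le L$ and connectedness) of the reflected configuration, and running the comparison uniformly over the possibly several connected components of $\Om\setminus\K$ and the various topological types of $\K$ permitted by Theorem~\ref{teo.opt}.
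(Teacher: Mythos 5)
Your part (iii) is correct and is essentially the paper's argument (radial competitors are annuli $\{\sigma\le|x|\le\rho\}$ or circles, which degenerate onto the inner circle as $L\downarrow 4\pi r_1$, so their eigenvalues tend to $\la_1(\Om)$ and are beaten by any fixed admissible set of positive capacity). The genuine gaps are in (i) and, above all, (ii). For (ii) your plan does not close, and you explicitly dismiss the idea that actually works. The paper's proof \emph{is} a symmetrization-type argument adapted to maximization: choose, by an intermediate-value argument, a diameter $l$ splitting $\Om\setminus\K$ into two pieces of equal boundary measure; for a test function $u$, the mediant inequality $\frac{a+b}{c+d}\le\max\big\{\frac{a}{c},\frac{b}{d}\big\}$ bounds the Rayleigh quotient of $u$ by the quotient of its restriction to the \emph{worse} half, which equals the quotient of the reflected function $u_s$ on $\Om\setminus K_s^+$, where $K_s^+$ is the union of the corresponding half of $\K$ with its mirror image (admissible precisely because $l$ equalizes perimeters). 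Choosing $u$ so that $u_s$ is the first eigenfunction of the symmetric configuration gives $\la_1(\Om\setminus\K)\le\la_1(\Om\setminus K_s^+)$; optimality forces equality and analyticity of eigenfunctions forces $\K=K_s^+$. Your moving-plane scheme, by contrast, is circular: $w=u_1-u_1\circ\sigma_{\theta^*}$ solves the equation only on $(\Om\setminus\K)\cap\sigma_{\theta^*}(\Om\setminus\K)$ and vanishes on $\partial B(r_0)\cup\partial B(r_1)$, but you have no Cauchy data for $w$ on the free boundary unless $\K$ is already symmetric, so Holmgren/unique continuation cannot be started; moreover \eqref{optimality} is not a constant overdetermination (the curvature varies along $\Kfree$), so no Serrin-type mechanism applies, and no choice of $\theta^*$ is shown to make $w$ signed or vanishing where needed.

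For (i), your architecture (structural ``wrapping'' step, then Hersch--Payne--Weinberger) matches the paper, but the one step you defer is the whole difficulty, and the mechanism you propose cannot deliver it: if $L_n\uparrow 2\pi(r_0+r_1)$ and maximizers $K_n$ failed to enclose $B(r_1)$, then Blaschke plus \v{S}ver\'ak (using $\la_1(\Om\setminus K_n)\to\infty$) would force $K_n\to\overline\Om$ in Hausdorff distance, but Theorem \ref{Mlsc} then yields only $2\pi(r_0+r_1)=\mi(\overline\Om)\le\liminf_n\mi(K_n)\le 2\pi(r_0+r_1)$ --- an equality, hence no contradiction. What is missing is a \emph{quantitative perimeter excess} for non-wrapping configurations: the paper partitions $\Om$ into $n^2$ small truncated cones (``patches''), observes that the blow-up of the eigenvalue forbids any patch in $\Om\setminus K_n$, and counts that each angular gap in $K_n$ costs two boundary arcs of length at least $r_0-r_1-2/n$ crossing the ring, so that the constraint $\mi(K_n)\le L_n$ forces the number of gaps to be zero for $n$ large; the region between $K_n$ and $B(r_1)$ is then filled (decreasing perimeter, increasing the eigenvalue), and the problem is reduced to Theorem \ref{casedisk} applied to $K_n\cup\overline{B(r_1)}$ in $B(r_0)$ with constraint $L-2\pi r_1$ --- a cleaner endgame than re-running Theorem \ref{hpw} with the Case~2 perturbation, since the disk theorem already disposes of segments and of competitors not contained in the open disk.
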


\begin{proof}
(i) Let $\{L_n\}$ be a sequence of real numbers with $L_n\uparrow 2\pi (r_0+r_1)$ as $n\to \infty$. Consider a sequence of maximizers $\{K_n\}$ of problem \eqref{problem} associated to the length constraints $\{L_n\}$ such that $\mi(K_n)\to\haus(\partial\Om)$ and $\lambda_1(\Omega\setminus K_n)\uparrow \infty$ as $n\to\infty$, and in particular $K_n\to^H \overline\Om$.

Firstly we claim the existence of $n_0\in\mathbb N$ such that for every $n>n_0$ the set $K_n$ has two residual domains in $\mathbb R^2$, each of which contains one of $\overline{\Om}$. 
If not, by (iii) of Theorem~\ref{teo.opt}, $K_n$ has only one residual domain in $\mathbb R^2$ with $\partial K_n$ connected and therefore we may assume the existence of a curve $\gamma$ joining $\partial B(r_0)$ to $\partial B(r_1)$ with $\haus(\gamma)\geq r_0-r_1>0$ and $\haus(\gamma\cap \partial\Omega)=0$ so that $K_n\subset \overline{\Omega}\setminus \gamma$ for every $n$. This inclusion implies the convergence, up to subsequences (not relabelled), $\partial K_n\to^H J$ where $J$ is a continuum with $J\supset \partial \Om\cup \gamma$. The inclusion $J\supset \partial \Om$ is a consequence of $K_n\to^H \overline{\Om}$; while  $J\supset \gamma$ follows from the fact that for every $x\in \gamma$ there exists a sequence of points $\{x_n\}$ with $x_n\in \partial K_n$ (since $K_n$ is closed) such that $x_n\to x$ (for every $\epsilon>0$  if $n$ is large enough the set $\Om\setminus K_n$ can not contain the open enlargement $\{x\in\Omega:\, \mathrm{d}_\gamma(x)<\epsilon\}$, otherwise the eigenvalue would remain uniformly bounded). By the additivity and monotonicity properties of $\haus$ with the Go\l ab theorem and Corollary \ref{cor.residual} we have
\[
\haus(\partial \Om)+\haus(\gamma)\leq \haus(J)\leq  \liminf_n \haus(\partial K_n)\leq   \liminf_n \mi(K_n)=\haus(\partial \Om).
\]
This implies $\haus(\gamma)=0$ a contradiction and the claim at the beginning of the proof holds. 

Now we claim that, for every $n>n_0$ with $n_0$ given by the previous claim, the \emph{bounded} residual domain $\omega$ of $K_n$ in $\mathbb R^2$ is $B(r_1)$ and, moreover, that its outer boundary is the boundary of a convex set. If not, the set $\hull(K_n)\setminus B(r_1)$ would be a better competitor contradicting the optimality of $K_n$. Indeed, let $\omega_0$ be the \emph{unbounded} residual domain of $K_n$ in $\mathbb R^2$. Since $\haus(\partial \hull(K_n))<\haus(\partial\omega_0)$ and $\haus(\partial B(r_1))<\haus(\partial \omega)$, by Theorem~\ref{teo.reg} and Corollary~\ref{cor.residual} we have 
\[
\begin{split}
\mi(\hull(K_n)\setminus B(r_1))&\!=\!\haus(\partial (\hull(K_n)\setminus B(r_1)))\!=\haus(\partial \hull(K_n))+\haus(\partial B(r_1))
\\ &<\haus(\partial \omega_0)+\haus(\partial \omega)=\haus(\partial K_n) \leq \mi(K_n).
\end{split}
\]
  
The previous claims implies that for every $n>n_0$, we may look for the solution of problem \eqref{problem} only among sets $K_n$ enclosing the internal disk $B(r_1)$, satisfying $\mi(K_n)=\mi(K_n\cup B(r_1))+\mi(B(r_1))$. This allows to work with $K_n\cup B(r_1)$ as unknown, to apply Theorem~\ref{casedisk} to this set with perimeter constraint $L-2\pi r_1$ and to obtain the thesis on the radial symmetry.

(ii) By items (ii) and (iii) of Theorem~\ref{teo.opt} and (iii) of Theorem~\ref{teo.reg} if a maximizer is radially symmetric then it is necessarily a ring of perimeter $L$ containing the interior disk $B(r_1)$. Obviously, the symmetry breaking holds true whenever $L< 4\pi r_1$ (because there are no rings containing the interior disk). In the case $L=4\pi r_1$ the unique possibility for $\K$ to be radially symmetric would be $\partial B(r_1)$, but this is clearly not a maximizer. We call $K_1$ the maximizer corresponding to this perimeter constraint. Clearly $\lambda_1(\Omega\setminus K_1)>\lambda_1(\Omega)$.
Now, we assume $L>4\pi r_1$ and write $L=4\pi r_1 +2\pi \alpha$ for some $0< \alpha\leq r_0-r_1$. Moreover,
we choose the largest $\alpha_1$ such that
\begin{equation}\label{breaking}
\la_1\big(\Omega\setminus \overline{B(r_1+\alpha_1)} \big)\leq \la_1(\Omega\setminus K_1).
\end{equation}
By the Sverak continuity result \cite{sve} and the previous point (i) we deduce that $0<\alpha_1<r_0-r_1$.
Moreover, among all those rings $\overline{B({s})}\setminus {B(r)}$ with $r_1\leq r\leq r_0$ and $s:=L/(2\pi)-r$ so that the total perimeter is $L$, from the second claim in the previous proof of point (i), it follows that the ring $\overline{B(r_1+\alpha)}\setminus B(r_1)$ that is glued onto the interior disk $B(r_1)$ is the one which realizes the largest eigenvalue, namely
\[
\lambda_1\big(\Omega\setminus (\overline{B(s)}\setminus {B(r)})\big)\leq \lambda_1\big(\Omega\setminus (\overline{B(r_1+\alpha)}\setminus B(r_1))\big).
\]
This combined with \eqref{breaking} proves that any radially symmetric set cannot be optimal for problem \eqref{problem} in the case $L<4\pi r+\alpha_1$.
\end{proof}

\subsection{Perforated domains}

We can collect Theorem~\ref{casedisk} and Theorem~\ref{casering} to identify explicit solutions also for more general domains $\Omega$. 
\begin{theorem}%\label{casemore}
Let $k\in\mathbb N$, let $r_0>0$ and let $\{C_i\}_{i=1}^k$ be a family of $k$ open convex sets that are separated and strictly contained in $B(r_0)$, namely
\begin{equation}\label{separated}
d:=\min_{1\leq i< j\leq k+1}\min_{x\in \overline{C_i}, y\in \overline{C_j}} |x-y|>0,
\end{equation} 
where $C_{k+1}:=\partial B(r_0)$.
Let $\Om=B(r_0)\setminus \bigcup_{i=1}^k \overline{C_i}$ be an open disk of radius $r_0$ with $k$ convex holes. 
There exists $\alpha_{0}>0$ such that if $L>\haus(\partial \Om)-\alpha_{0}$ then problem \eqref{problem} has a unique solution: the maximizer $\K$ is given by the perforated disk $\overline{B(r)}\setminus  \bigcup_{i=1}^k C_i$ with $B(r)$ is the disk concentric to $B(r_0)$ with $r=L/(2\pi)-\sum_{i=1}^k\haus(\partial C_i)/(2\pi)$.
\end{theorem}
\begin{proof}
The proof follows the ones of Theorem~\ref{casedisk} and of Theorem~\ref{casering} . Therefore, we only sketch the principal steps.
Let $\{L_n\}$ be a sequence of real numbers with $L_n\uparrow \haus(\partial \Om)$ as $n\to \infty$. Consider a sequence of maximizers $\{K_n\}$ of problem \eqref{problem} associated to the length constraints $\{L_n\}$ so that $\lambda_1(\Omega\setminus K_n)\uparrow \infty$ as $n\to\infty$, and in particular $K_n\to^H \overline\Om$. For every $n$ large enough, the following facts hold.

- The set $K_n$ has \emph{exactly} $k+1$ residual domains in $\mathbb R^2$. Otherwise there would exist a curve $\gamma$ connecting two points belonging to two different connected components of $\partial \Om$ such that $\haus(\gamma)\geq d>0$ (recall the assumption \eqref{separated}) and by the Go\l ab theorem one would reach a contradiction.

- Every \emph{bounded} residual domain $\omega$ of $K_n$ in $\mathbb R^2$ \emph{coincides} with $C_i$ for $i=1,\dots, k$. This is a consequence of Theorem~\ref{teo.reg} and Corollary~\ref{cor.residual}, with the fact that all the sets $C_i$ are convex.

- One concludes by applying Theorem~\ref{casedisk} to the sets $\bigcup_{i=1}^k C_i\cup K_n$ with perimeter constraint $L-\sum_{i=1}^k\haus(\partial C_i)$.
\end{proof}

\end{document}